\theoremstyle{definition}
\newtheorem{theorem}{Theorem}
\newtheorem{corollary}[theorem]{Corollary}
\newtheorem{lemma}[theorem]{Lemma}
\theoremstyle{definition}
\newtheorem{definition}{Definition}
\newtheorem{example}{Example}
\newcommand{\mbR}{{\mathbb{R}}}
\newcommand{\mbZ}{{\mathbb{Z}}}
\newcommand{\Rd}{\partial}
\newcommand{\cU}{\mathcal{U}}
\begin{document}

\title{
Efron's curvature of the structural gradient model}
\author{Tomonari SEI}

\maketitle

\begin{abstract}
The structural gradient model is a multivariate statistical model
in order to extract various interactions of given data set.
In this note, we show that Efron's statistical curvature of
the structural gradient model is less than
that of a competitive mixture model under
a null hypothesis.
\end{abstract}

\setlength{\baselineskip}{18pt}

\section{Introduction} \label{section:intro}

Exponential families are important in statistical modeling.
For example, the Gaussian family and its subfamilies are often used
in multivariate analysis, time-series analysis, geostatistics
and any other areas that deal with quantitative data.
Using the exponential family is reasonable
because it is derived from the maximum entropy criterion
(see e.g.\ \cite{cover2006}).
It is also compatible with regression problem,
that is, the generalized linear models (\cite{GLM}).
A comprehensive book on exponential families is \cite{barndorff-nielsen1978}.

A drawback of exponential families is that
the probability density function is sometimes not explicitly expressed
due to the normalizing constant.
For example, if one would try to find three-dimensional interaction
of given data, a corresponding exponential family
is not available in explicit form.
Although the Markov Chain Monte Carlo procedure is available,
it requires some adjustment for convergence.
As an attempt to overcome the difficulty,
\cite{sei2010} suggested a new parametric family called 
a structural gradient model (SGM) for multivariate quantitative data.
SGM is numerically shown to have a desirable performance for such a purpose.
However, it is not known whether SGM is close to an exponential family or not.
In this paper, we give a partial answer to this problem.

A measure of closeness to an exponential family
is Efron's statistical curvature $\gamma^2$,
refered to {\em the Efron curvature} below.
It is defined in terms of the second-order derivative
of the log-likelihood function.
See Section~\ref{section:Efron} for the precise definition.
\cite{efron1975} showed that information loss of the maximum likelihood estimator
is asymptotically expanded as
$\gamma^2+{\rm o}(1)$ if the sample size $N$ goes to infinity.
It is known that $\gamma^2$ vanishes
if the model is an exponential family.
Furthermore, $\gamma^2$ is an intrinsic quantity,
that is, independent of the parameterization of the model.

Consider two statistical models $M_1$ and $M_2$,
and assume that they have a common density $p_0$
and a common score vector at $p_0$.
The Fisher information matrix at $p_0$ is common
in both models.
Then we can say that, without subjectivity, the model $M_1$
is closer to exponential family at $p_0$ than $M_2$
if the Efron curvature of $M_1$ is smaller than $M_2$.

We compare the Efron curvature of SGM and MixM,
which is a competitive model with SGM.
MixM is an abbreviation of {\em the structural mixture model}.
Here we briefly describe SGM and MixM.
For details, refer to Section~\ref{section:SGM} and \cite{sei2010}.
SGM is a statistical model on hypercube
represented by Fourier-expanded optimal transport between
the target density and the uniform density.
Here the Fourier coefficients are the unknown parameter.
The model is related to the optimal transport theory.
See \cite{villani2003} and \cite{villani2009} for the optimal transport theory.
MixM is represented by Fourier expansion of the probability density
function itself.
Both SGM and MixM do not need computation of normalizing constants,
in contrast to the exponential family.
We show that the curvature of SGM is less than MixM
under the common null hypothesis.
In other words, SGM is closer to exponential family than MixM.
This motivates to use SGM rather than MixM
for analyzing complicated dependency of given data.

The paper is organized as follows.
We recall the definition of the Efron curvature in Section~\ref{section:Efron}
and define SGM and MixM in Section~\ref{section:SGM}.
Then we state the main result of this paper
in Section~\ref{section:main}.
We give some discussion in Section~\ref{sec:discussion}.
Proofs are given in Section~\ref{section:proof}.

\section{Efron's statistical curvature} \label{section:Efron}

We recall the Efron curvature of a general statistical model
according to \cite{efron1975}, \cite{reeds1975} and \cite{amari1985}.
Intuitively, the Efron curvature is 
the residual when the second derivative of the log-likelihood
is projected onto the linear space spanned by
the score functions and the constant function.

Consider a parametric family
of density functions $p(x|\theta)$ with respect to
a base measure $\mathrm{d}x$ indexed by
a parameter vector $\theta=(\theta_u)_{u\in\cU}$,
where $\cU$ is a finite set.
Typically $\cU=\{1,\ldots,d\}$ with some $d\geq 1$,
but we will consider other case in the next section.
The parameter space $\Theta$ of $\theta$
is an open subset of $\mbR^{\cU}$,
where $\mbR^{\cU}$ denotes
the set of all real vectors $(\theta_u)_{u\in\cU}$
indexed by $\cU$.
Without loss of generality, we assume $0\in\Theta$ and define
the curvature at $\theta=0$.

Denote the first and second derivative of the log-likelihood function by
\begin{align*}
 L_u &= L_u(x) = \left.\frac{\Rd}{\Rd\theta_u}\log p(x|\theta)\right|_{\theta=0},
 \\
 L_{uv} &= L_{uv}(x) = \left.\frac{\Rd^2}{\Rd\theta_u\Rd\theta_v}\log p(x|\theta)\right|_{\theta=0}
\end{align*}
for $u,v\in\cU$.
Define the Fisher information $(J_{uv})_{u,v\in\cU}$
and the e-connection coefficients $(\Gamma_{uv,w})_{u,v,w\in\cU}$
and $(\Gamma_{uv}^{w})_{u,v,w\in\cU}$ by
\begin{align*}
 J_{uv}
 = \int p(x|0)
 L_uL_v\mathrm{d}x,
 \quad
 \Gamma_{uv,w}
 = \int p(x|0)
 L_{uv}L_w\mathrm{d}x,
 \quad
 \Gamma_{uv}^w
  = \sum_{s\in\cU}\Gamma_{uv,s}J^{sw},
\end{align*}
where $(J^{sw})$ is the inverse matrix of $(J_{sw})$.
We define a fourth-order tensor by
\[
 Q_{uv,wz}\ =\ 
 \int p(x|0)\left(L_{uv}+J_{uv}-\sum_{s\in\cU}\Gamma_{uv}^sL_s\right)
 \left(L_{wz}+J_{wz}-\sum_{t\in\cU}\Gamma_{wz}^tL_t\right)\mathrm{d}x.
\]
Finally, we define the Efron curvature by
\begin{align}
 \gamma^2\ =\ \sum_{u,v,w,z\in\cU}Q_{uv,wz}J^{uw}J^{vz}.
 \label{eqn:Efron-curvature}
\end{align}
The Efron curvature is a non-negative scalar quantity
independent of parameterization of $p(x|\theta)$.

The Efron curvature is related to the exponential family
and information loss as stated in Section~\ref{section:intro}.
Precise statements are as follows.
Recall that a statistical model $p(x|\theta)$
is called an exponential family (in canonical form) if
it is written as
$p(x|\theta)=\exp(\sum_{u\in\cU}\theta_ut_u(x)-\psi(\theta))$
with the sufficient statistics $t_u(x)$ and the normalizing function $\psi(\theta)$.

\begin{lemma}
 Let $\Theta$ be an open subset of $\mbR^{\cU}$.
 Then the Efron curvature vanishes over $\Theta$
 if and only if $p(x|\theta)$ is an exponential family.
\end{lemma}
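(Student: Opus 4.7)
The plan is to prove the two directions separately. For the \emph{if} direction, I would compute directly. For the canonical exponential family $p(x|\theta)=\exp(\sum_u\theta_u t_u(x)-\psi(\theta))$, one has $L_u(x)=t_u(x)-\Rd_u\psi(0)$ and $L_{uv}(x)=-\Rd_u\Rd_v\psi(0)$, a deterministic constant equal to $-J_{uv}$ by the familiar identity that the Fisher information equals the Hessian of $\psi$. Consequently $\Gamma_{uv,w}=-J_{uv}\int p(x|0)L_w(x)\,\mathrm{d}x=0$ (the score has mean zero), so $\Gamma_{uv}^s=0$, and the residual $R_{uv}(x):=L_{uv}(x)+J_{uv}-\sum_s\Gamma_{uv}^sL_s(x)$ vanishes identically. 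Hence $Q_{uv,wz}=0$ and $\gamma^2=0$, and applying the same computation at any other point of $\Theta$ (after translating the origin) yields vanishing over all of $\Theta$.

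For the \emph{only if} direction, I would first observe that $Q_{uv,wz}=\int p(x|0)R_{uv}(x)R_{wz}(x)\,\mathrm{d}x$ is a Gram tensor, and rewrite $\gamma^2$ as a genuine sum of squares by inserting a square root of $(J^{uv})$. Writing $J^{uv}=\sum_a K^u_aK^v_a$ with the invertible matrix $(K^u_a)$ gives
\[
\gamma^2=\sum_{a,b}\int p(x|0)\Bigl(\sum_{u,v\in\cU}K^u_aK^v_bR_{uv}(x)\Bigr)^{\!2}\,\mathrm{d}x,
\]
so $\gamma^2=0$ forces $R_{uv}=0$ almost surely for every $u,v$. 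Applying this at every $\theta_0\in\Theta$ (translating the origin of parametrization) yields the pointwise identity
\[
\Rd_u\Rd_v\log p(x|\theta)=-J_{uv}(\theta)+\sum_{s\in\cU}\Gamma_{uv}^s(\theta)\,\Rd_s\log p(x|\theta),\qquad\theta\in\Theta.
\]

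The crux is then to integrate this identity. Along any line $t\mapsto th$ in $\Theta$, the vector $y_u(t,x):=\Rd_u\log p(x|th)$ satisfies the linear inhomogeneous ODE $\dot y_u=a_u(t)+\sum_s b_u^s(t)y_s$, whose coefficients $a_u,b_u^s$ are $\theta$-dependent \emph{constants} and hence depend only on $t$. Because the ODE coefficients are independent of $x$, its unique solution with initial datum $y_u(0,x)=L_u(x)$ is affine in the initial datum: $y_u(t,x)=\alpha_u(t)+\sum_s\beta_u^s(t)L_s(x)$, where $\alpha_u$ and $\beta_u^s$ come from the particular solution and the fundamental matrix of the scalar system. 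Varying $h$ and $t$ to sweep out $\Theta$, I obtain $\Rd_u\log p(x|\theta)=\alpha_u(\theta)+\sum_s\beta_u^s(\theta)L_s(x)$ everywhere on $\Theta$.

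Finally, the symmetry of second partials, combined with a reduction (legitimate without loss of generality) to a linearly independent family $\{1,L_s\}_{s\in\cU}$, forces $\Rd_v\alpha_u=\Rd_u\alpha_v$ and $\Rd_v\beta_u^s=\Rd_u\beta_v^s$. Poincar\'e's lemma then provides potentials $\psi(\theta)$ and $\phi^s(\theta)$ with $\alpha_u=-\Rd_u\psi$ and $\beta_u^s=\Rd_u\phi^s$, and integrating gives $\log p(x|\theta)=\sum_s\phi^s(\theta)L_s(x)-\psi(\theta)+C(x)$ for some $x$-dependent term $C$. Absorbing $e^{C(x)}$ into the base measure and reparametrizing by $\phi$ exhibits $p(x|\theta)$ as an exponential family. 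I expect the principal obstacle to be the ODE step: one must argue carefully that the $x$-parametrized linear system with $x$-independent coefficients admits an $x$-affine solution and that the scalar ODEs for $\alpha,\beta$ extend globally on $\Theta$; the reduction to linearly independent scores is a small side technicality handled by quotienting out non-identifiable coordinate directions.
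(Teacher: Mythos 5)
The paper itself gives no proof of this lemma: it is quoted from the literature, with the reader referred to Efron (1975), Reeds (1975) and Amari (1985). Your argument is a correct, self-contained rendering of the classical proof and follows the route those sources take. The \emph{if} direction is exactly the observation that $L_{uv}$ is the constant $-J_{uv}$ for a canonical exponential family, so the residual $R_{uv}=L_{uv}+J_{uv}-\sum_s\Gamma_{uv}^sL_s$ vanishes identically; the \emph{only if} direction rests on the fact that $\gamma^2$ is a sum of squares (your factorization $J^{uv}=\sum_aK^u_aK^v_a$ makes this explicit and is valid since $J$ is positive definite), so its vanishing forces $R_{uv}\equiv0$, i.e.\ the model is autoparallel for the exponential connection, and integrating the resulting $\theta$-linear system recovers the exponential form. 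Three remarks. First, the reduction to a linearly independent family $\{1,L_s\}$ is unnecessary: nonsingularity of $J$ (already implicit in the definition of $\gamma^2$) together with $\int p(x|0)L_s(x)\,\mathrm{d}x=0$ gives that independence for free, and the same facts give $\alpha_u(0)=0$ and $\beta_u^s(0)=1_{\{u=s\}}$, which you need to identify the coefficients anyway. Second, you should say explicitly that the reparametrization $\phi$ is legitimate: since $\beta$ is a fundamental matrix of a linear ODE it is invertible along each ray, so the Jacobian of $\phi$ is nonsingular and the family is a full, not merely curved, exponential family --- this matters because curved exponential families generally have nonzero Efron curvature. Third, the ray-plus-Poincar\'e step implicitly assumes $\Theta$ is connected and star-shaped about $0$ (or simply connected, patching local solutions); the lemma's bare ``open subset'' leaves this hypothesis implicit, as do the cited sources, so this is a statement-level imprecision rather than a defect of your argument, but it deserves a sentence.
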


\begin{lemma} \label{lem:loss}
 Let $(x_1,\ldots,x_N)$ be an i.i.d.\ sample
 from a density $p(x|\theta)$.
 Then, under some regularity conditions,
 the information loss of the maximum likelihood estimator $\hat{\theta}_N$
 is asymptotically
 \begin{align*}
  J_{uv}^{(x_1,\ldots,x_N)} - J_{uv}^{\rm \hat{\theta}_N}
 = \sum_{w,z}Q_{uw,vz}J^{wz} + {\rm o}(1)
 \end{align*}
 as $N\to\infty$,
 where $J_{uv}^{T}$
 denotes the Fisher information matrix of a statistic $T$.
 Note that $J_{uv}^{(x_1,\ldots,x_N)}=NJ_{uv}$.
 In particular, averaged information loss is given by
 \begin{align*}
  \sum_{u,v\in\cU}J^{uv}
  \left(J_{uv}^{(x_1,\ldots,x_N)} - J_{uv}^{\rm \hat{\theta}_N}
  \right)
 = \gamma^2 + {\rm o}(1).
 \end{align*}
\end{lemma}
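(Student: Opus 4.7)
The plan is to rewrite the information loss as an expected conditional covariance of the sample score given the MLE, and then match it term by term with the tensor $Q$. Let $S_u(\theta)=\sum_{i=1}^N(\Rd/\Rd\theta_u)\log p(x_i|\theta)$ denote the total score, so that $S_u(\hat\theta_N)=0$ by definition of the MLE. A standard integration-by-parts against the marginal density of $\hat\theta_N$ yields the conditional-score identity $E_\theta[S_u\mid\hat\theta_N]=(\Rd/\Rd\theta_u)\log p_{\hat\theta_N}(\hat\theta_N|\theta)$; combined with the law of total covariance this gives
\[
NJ_{uv}\;=\;J_{uv}^{\hat\theta_N}\;+\;E_\theta\bigl[\mathrm{Cov}(S_u,S_v\mid\hat\theta_N)\bigr].
\]
Hence the information loss is exactly the expected conditional covariance, and the task reduces to evaluating it to order $N^0$.

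Next, under the null $\theta=0$, Taylor-expand the MLE equation about $\theta=0$:
\[
0\;=\;S_u(0)+\sum_v\hat\theta_N^v\,\ell_{uv}(0)+\tfrac{1}{2}\sum_{v,w}\hat\theta_N^v\hat\theta_N^w\,\ell_{uvw}(0)+\cdots,
\]
where $\ell_{uv}(0)=\sum_i L_{uv}(x_i)$. Using $\hat\theta_N=O_p(N^{-1/2})$, $\ell_{uv}(0)/N=-J_{uv}+O_p(N^{-1/2})$, and the analogous first-order expansion $\hat\theta_N^s=N^{-1}\sum_rJ^{sr}\sum_iL_r(x_i)+O_p(N^{-1})$, conditioning on $\hat\theta_N$ is asymptotically equivalent (at the required order) to conditioning on the normalized score vector $\bar L_s:=N^{-1/2}\sum_iL_s(x_i)$. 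In the joint Gaussian limit, the residual of $W_{uv}:=(\ell_{uv}(0)+NJ_{uv})/\sqrt{N}$ after linear regression onto $\bar L_s$ has asymptotic covariance precisely equal to $Q_{uv,u'v'}$, because the additive $+J_{uv}$ removes the mean of $L_{uv}$ (Bartlett identity $E[L_{uv}]=-J_{uv}$) while $\sum_s\Gamma_{uv}^sL_s$ is the $L^2(p(x|0))$-projection of $L_{uv}$ onto $\mathrm{span}\{L_s:s\in\cU\}$, since $\Gamma_{uv,s}=E[L_{uv}L_s]$ is the relevant regression coefficient. Multiplying by the two factors $\sqrt{N}\hat\theta_N^v,\sqrt{N}\hat\theta_N^{v'}$ coming from the Taylor expansion and taking expectation produces
\[
E_\theta\bigl[\mathrm{Cov}(S_u,S_{u'}\mid\hat\theta_N)\bigr]\;=\;\sum_{v,v'}Q_{uv,u'v'}J^{vv'}+{\rm o}(1),
\]
which, after relabelling indices, is the first claim of the lemma.

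The main obstacle is making the informal ``conditional covariance equals orthogonal projection residual'' argument rigorous at the $o(1)$ level. One approach is to construct an approximately ancillary statistic $A$ such that $(\hat\theta_N,A)$ is asymptotically sufficient and read the conditional covariance of $S$ off from the marginal variance of the $A$-coordinates; an alternative is a full Edgeworth-type expansion of the joint density of $(\bar L,W)$ with explicit bookkeeping of the third- and fourth-order cumulants. Both routes are standard in the proof of Efron's original theorem and correspond to the ``regularity conditions'' in the statement, namely $\sqrt{N}$-consistency and asymptotic normality of the MLE, validity of the Bartlett identities through order three, and uniform integrability of the appropriate moments.

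Finally, contracting the componentwise identity with $J^{uv}$ gives
\[
\sum_{u,v\in\cU}J^{uv}\bigl(NJ_{uv}-J_{uv}^{\hat\theta_N}\bigr)\;=\;\sum_{u,v,w,z\in\cU}Q_{uw,vz}J^{uv}J^{wz}+{\rm o}(1),
\]
and a cosmetic relabelling of indices in the definition (\ref{eqn:Efron-curvature}) identifies the right-hand side with $\gamma^2+{\rm o}(1)$.
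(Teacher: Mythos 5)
The paper does not actually prove this lemma: it is imported from the literature, with the proof deferred to \cite{efron1975}, \cite{reeds1975} and \cite{amari1985}. Your sketch reconstructs precisely the route taken in those references: first the exact decomposition $NJ_{uv}=J_{uv}^{\hat\theta_N}+\mathrm{E}_\theta[\mathrm{Cov}(S_u,S_v\mid\hat\theta_N)]$, obtained from the law of total covariance together with the identity that $\mathrm{E}_\theta[S_u\mid\hat\theta_N]$ is the score of the marginal density of $\hat\theta_N$; then an asymptotic evaluation of the conditional covariance through the stochastic expansion of the likelihood equation, identifying $Q_{uv,wz}$ as the covariance of the residual of $L_{uv}+J_{uv}$ after $L^2$-projection onto $\mathrm{span}\{1,L_s:s\in\cU\}$ (your identification of $\Gamma_{uv}^s$ as the regression coefficient and of $+J_{uv}$ as the centering via the Bartlett identity is correct). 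So there is no alternative in-paper argument to compare against; the only question is whether your sketch is sound, and it essentially is, with the rigor gap you yourself flag being exactly what the lemma's ``under some regularity conditions'' and the three citations are meant to cover.

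One phrasing deserves correction. The sentence ``conditioning on $\hat\theta_N$ is asymptotically equivalent (at the required order) to conditioning on $\bar L_s$'' cannot be taken literally: since $S_u(0)=\sqrt{N}\,\bar L_u$ is a deterministic function of $\bar L$, that equivalence applied to $S_u$ itself would force $\mathrm{Cov}(S_u,S_v\mid\hat\theta_N)=0$ and hence zero information loss. The statement you actually need — and the one your subsequent computation uses — is that the residuals $W_{uv}-\sum_s\Gamma_{uv}^s\bar L_s$ are asymptotically independent of $\bar L$, hence of $\hat\theta_N$ to the relevant order, so that the $O_p(1)$ conditional fluctuation of $S_u(0)$ given $\hat\theta_N$ is carried entirely by these residuals through the likelihood equation, with conditional covariance $NQ_{uv,u'v'}+{\rm o}(N)$; combining with $\mathrm{E}[\hat\theta_N^v\hat\theta_N^{v'}]=N^{-1}J^{vv'}+{\rm o}(N^{-1})$ then yields $\sum_{v,v'}Q_{uv,u'v'}J^{vv'}$. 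With that repair the argument is the standard one, and the final contraction with $J^{uv}$ to obtain $\gamma^2$ is immediate from (\ref{eqn:Efron-curvature}).
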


For the proof, refer to \cite{efron1975}, \cite{reeds1975} and \cite{amari1985}.

\section{SGM and MixM} \label{section:SGM}

We prepare some notations to define SGM and MixM.
Let $m$ be a positive integer.
Denote the gradient operator and Hessian operator on $\mbR^m$
by $D=(\Rd/\Rd x_i)_{i=1}^m$ and
$D^2=(\Rd^2/\Rd x_i\Rd x_j)_{i,j=1}^m$, respectively.
The determinant and trace of a square matrix $A$
are denoted by $\det A$ and $\mathrm{\rm tr}A$, respectively.
For square matrices $A$ and $B$,
if $A-B$ is non-negative definite,
we write $A\succeq B$.
Let $\mbZ$ and $\mbZ_{\geq 0}$ be the set of all integers and 
all non-negative integers, respectively.
Let $(\mbZ_{\geq 0}^m)^+=\mbZ_{\geq 0}^m\setminus\{0\}$ be
the set of all $m$-dimensional non-negative integer vectors
except for zero vector.
Define $\|u\|=(\sum_{j=1}^m u_j^2)^{1/2}$ for $u\in\mbZ^m$.
The vectors are considered as column vectors
unless otherwise stated.

We give the definition of SGM and MixM.
Examples are given later.

\begin{definition}[SGM] \label{defn:SGM}
 Let $\cU$ be a finite subset of $(\mbZ_{\geq 0}^m)^+$.
 {\em The structural gradient model} (SGM) is
 a set of probability densities on the hypercube $[0,1]^m$
 with parameter vector $\theta=(\theta_u)\in\mbR^{\cU}$
 defined by
 \begin{align}
  p^{\rm (sgm)}(x|\theta)\ =\ \det(D^2\psi(x|\theta)),\quad 
  \psi(x|\theta)
   \ =\ \frac{1}{2}x^{\top}x
  - \sum_{u\in\cU} \frac{\theta_u}{\pi^2}\prod_{j=1}^m \cos(\pi u_jx_j).
  \label{eqn:SGM}
 \end{align}
 The parameter vector $\theta$ is said to be feasible if
 $D^2\psi(x|\theta)\succeq 0$ for every $x\in[0,1]^m$.
\end{definition}

\begin{definition}[MixM] \label{defn:mixture}
 Under the same notation as SGM, define
 \begin{align}
  p^{\rm (mix)}(x|\theta)\ =\ 
  1 + \sum_{u\in\cU} \theta_u\|u\|^2\prod_{j=1}^m \cos(\pi u_jx_j).
  \label{eqn:mix}
 \end{align}
 The set of $p^{\rm (mix)}(x|\theta)$ is called MixM in this paper.
 The parameter vector $\theta$ is feasible
 if $p^{\rm (mix)}(x|\theta)\geq 0$ for all $x\in[0,1]^m$.
\end{definition}

Remark that both $p^{\rm (sgm)}(x|\theta=0)$
and $p^{\rm (mix)}(x|\theta=0)$ are the uniform density.

Define a matrix $H_u(x)$ by
\begin{align}
  H_u(x)\ :=\ D^2\left(-\pi^{-2}\prod_{j=1}^m\cos(\pi u_jx_j)\right).
   \label{eqn:H_u}
\end{align}
In particular,
\begin{align*}
 \mathop{\rm tr}H_u(x)=\|u\|^2\prod_{j=1}^m\cos(\pi u_jx_j).
\end{align*}
Then we can rewrite (\ref{eqn:SGM}) and (\ref{eqn:mix}) as
\begin{align}
 p^{\rm (sgm)}(x|\theta)
 = \det\left(I+\sum_{u\in\cU}\theta_uH_u(x)\right),
 \quad p^{\rm (mix)}(x|\theta)
 = 1 + \sum_{u\in\cU}\theta_u\mathrm{tr}H_u(x).
 \label{eqn:SGM-MixM-rewrite}
\end{align}

We state a fundamental lemma.
For completeness, we prove it in Section~\ref{section:proof}.
We denote the indicator function of a set $A$
by $1_A$.

\begin{lemma}[\cite{sei2010} Lemma 3] \label{lem:score-fisher}
 The score vector at $\theta=0$ of both SGM and MixM
 is $(\mathop{\rm tr}H_u(x))_{u\in\cU}$.
 The common Fisher information matrix $J=(J_{uv})_{u,v\in\cU}$ at $\theta=0$
 is $J_{uv}=\|u\|^42^{-|\sigma(u)|}1_{\{u=v\}}$,
 where $\sigma(u)=\{j\in\{1,\ldots,m\}\mid u_j>0\}$ and $|\sigma(u)|$ denotes the cardinality of $\sigma(u)$.
 In particular, $J_{uv}$ is diagonal.
\end{lemma}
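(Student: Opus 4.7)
The plan is to compute the score vector directly for each model and then evaluate the Fisher information as a product of one-dimensional cosine integrals, keeping careful track of which coordinates of $u$ vanish.

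First I would verify that both models yield the same score. For MixM this is immediate: since $p^{\rm (mix)}(x|0)=1$, we have $L_u = \partial_{\theta_u}\log p^{\rm (mix)}|_{\theta=0} = \partial_{\theta_u}p^{\rm (mix)}|_{\theta=0} = \mathop{\rm tr}H_u(x)$ from (\ref{eqn:SGM-MixM-rewrite}). For SGM I would invoke Jacobi's formula $\partial_t\det(A(t)) = \det(A(t))\mathop{\rm tr}(A(t)^{-1}\partial_t A(t))$ applied to $A(\theta)=I+\sum_v\theta_v H_v(x)$; evaluating at $\theta=0$ gives $A=I$ and hence $\partial_{\theta_u}p^{\rm (sgm)}|_{\theta=0} = \mathop{\rm tr}H_u(x)$, so again $L_u=\mathop{\rm tr}H_u(x)$.

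Next I would compute $J_{uv}=\int_{[0,1]^m}\mathop{\rm tr}H_u(x)\mathop{\rm tr}H_v(x)\,\mathrm{d}x$ using the product expression $\mathop{\rm tr}H_u(x)=\|u\|^2\prod_j\cos(\pi u_jx_j)$. This factorizes the integral into a product of one-dimensional integrals $I_{u_j,v_j}:=\int_0^1\cos(\pi u_jx_j)\cos(\pi v_jx_j)\,\mathrm{d}x_j$. A short case analysis (using the product-to-sum identity, or Fourier orthogonality of $\{1,\sqrt{2}\cos(\pi k t)\}_{k\geq 1}$ on $[0,1]$) shows that $I_{u_j,v_j}$ equals $1$ when $u_j=v_j=0$, equals $1/2$ when $u_j=v_j>0$, and vanishes when $u_j\neq v_j$. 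Consequently the product is nonzero only if $u=v$, and in that case equals $2^{-|\sigma(u)|}$. Multiplying by the prefactor $\|u\|^2\|v\|^2$ yields $J_{uv}=\|u\|^4 2^{-|\sigma(u)|}1_{\{u=v\}}$.

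I do not anticipate a real obstacle: the argument reduces entirely to Jacobi's formula (for the SGM score) and to the well-known cosine orthogonality on $[0,1]$. The one point requiring mild attention is treating the coordinates with $u_j=0$ separately from those with $u_j>0$, which is exactly what produces the factor $2^{-|\sigma(u)|}$ rather than the naive $2^{-m}$.
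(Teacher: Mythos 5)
Your proof is correct. The score computation is essentially the paper's: the paper expands $\log\det(I+\sum_u\theta_uH_u)$ to second order, and your use of Jacobi's formula is just the first-order term of that same expansion, so both yield $L_u=\mathop{\rm tr}H_u$. For the Fisher information you take a genuinely different (and more elementary) route: you factorize $\int_{[0,1]^m}\mathop{\rm tr}H_u\mathop{\rm tr}H_v\,\mathrm{d}x$ into one-dimensional integrals $\int_0^1\cos(\pi u_jx_j)\cos(\pi v_jx_j)\,\mathrm{d}x_j$ and invoke cosine orthogonality on $[0,1]$, with the case split $u_j=v_j=0$ versus $u_j=v_j>0$ correctly producing $2^{-|\sigma(u)|}$ rather than $2^{-m}$. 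The paper instead first establishes the representation $H_u=\mathrm{E}_U[\mathrm{e}^{\mathrm{i}\pi U^{\top}x}UU^{\top}]$ via Bernoulli randomization of the signs of $u$, replaces $x$ by a uniform variable $\xi$ on $[-1,1]^m$, and reduces everything to the single identity $\mathrm{E}_{\xi}[\mathrm{e}^{\mathrm{i}\pi a^{\top}\xi}]=1_{\{a=0\}}$, obtaining $J_{uv}=\mathrm{E}_{U,V}[1_{\{U+V=0\}}]\|u\|^2\|v\|^2$. The two computations encode the same Fourier orthogonality; yours is more self-contained for this lemma, while the paper's Bernoulli-randomization formalism is deliberately set up here because it is reused verbatim for the higher moments $\Gamma_{uv,w}$ and $Q_{uv,wz}$ in the proof of Theorem~\ref{thm:main-explicit}, where a direct coordinatewise factorization would become unwieldy.
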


We give a few examples,
where we write $(u_1,\ldots,u_m)$ instead of $(u_1,\ldots,u_m)^{\top}$ for simplicity.

\begin{example}
 Let $m=2$ and $\cU=\{(1,1)\}$.
 We abbreviate $\theta_{(1,1)}$ as $\theta$ for simplicity.
 Then we have
 \begin{align*}
  p^{\rm (sgm)}(x|\theta)
  &= \det
 \begin{pmatrix}
  1+\theta\cos(\pi x_1)\cos(\pi x_2) & -\theta\sin(\pi x_1)\sin(\pi x_2) \\
  -\theta\sin(\pi x_1)\sin(\pi x_2) & 1+\theta\cos(\pi x_1)\cos(\pi x_2)
 \end{pmatrix}
  \\
  &= 1+2\theta \cos(\pi x_1)\cos(\pi x_2)
  +\theta^2\{\cos^2(\pi x_1)+\cos^2(\pi x_2)-1\}
 \end{align*}
 and $p^{\rm (mix)}(x|\theta)=1+2\theta \cos(\pi x_1)\cos(\pi x_2)$.
 SGM is feasible if and only if $|\theta|\leq 1$.
 MixM is feasible if and only if $|\theta|\leq 1/2$.
\end{example}

\begin{example}
Let $m=3$ and $\cU=\{(1,0,0),(2,0,0),(1,1,0),(2,1,0),(1,1,1)\}$.
Then the diagonal part $J_u:=J_{uu}$ of the Fisher information matrix
is
\[
 J_{(1,0,0)}=\frac{1}{2},\quad
 J_{(2,0,0)}=8,\quad
 J_{(1,1,0)}=1,\quad
 J_{(2,1,0)}=\frac{25}{4},\quad
 J_{(1,1,1)}=\frac{9}{8}.
\]
\end{example}

\section{Main result} \label{section:main}

Consider a finite subset $\cU$ of $(\mbZ_{\geq 0}^m)^+$.
Let $(\gamma_{\cU}^2)^{\rm (sgm)}$ and $(\gamma_{\cU}^2)^{\rm (mix)}$
be the Efron curvature (\ref{eqn:Efron-curvature}) of SGM and MixM at $\theta=0$, respectively.
For each $i\in\{1,\ldots,m\}$,
we set $\mbZ_i=\{u\in(\mbZ_{\geq 0}^m)^+\mid u_j=0\ \mbox{if}\ j\neq i\}$.

Our main result is the following theorem.

\begin{theorem} \label{thm:main}
 For any finite $\cU\subset(\mbZ_{\geq 0}^m)^+$, the following inequality holds:
 \begin{align}
  0 < (\gamma_{\cU}^2)^{\rm (sgm)} \leq (\gamma_{\cU}^2)^{\rm (mix)}.
  \label{eqn:main}
 \end{align}
 Equality holds if and only if there is some $i\in\{1,\ldots,m\}$
 such that $\cU\subset\mbZ_i$.
 If the equality holds, then the two models coincide.
\end{theorem}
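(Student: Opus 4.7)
My plan is to compute the Fourier expansions of $L_{uv}^{\rm (sgm)}$ and $L_{uv}^{\rm (mix)}$ and to compare them mode by mode via Cauchy--Schwarz. Writing $\phi_w(x):=\prod_j\cos(\pi w_jx_j)$ for the Fourier basis of $L^2([0,1]^m)$ relative to the uniform density, I first observe that $L_s=\|s\|^2\phi_s$ and $1=\phi_0$, so the subspace $V:=\mathrm{span}(1,\{L_s\}_{s\in\cU})$ coincides with $\mathrm{span}(\phi_w:w\in\cU\cup\{0\})$. Because $J$ is diagonal by Lemma~\ref{lem:score-fisher}, the formula (\ref{eqn:Efron-curvature}) simplifies to
\begin{align*}
\gamma^2_\cU=\sum_{u,v\in\cU}\frac{E[\tilde L_{uv}^2]}{J_{uu}J_{vv}},
\end{align*}
where the residual $\tilde L_{uv}=L_{uv}+J_{uv}-\sum_s\Gamma_{uv}^sL_s$ is the orthogonal projection of $L_{uv}$ onto $V^\perp$ in $L^2(p(\cdot|0))$, i.e., the portion of the Fourier series of $L_{uv}$ supported on modes $w\notin\cU\cup\{0\}$.

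A second-order expansion of $\log(1+y)$ and $\log\det(I+A)$ gives $L_{uv}^{\rm (mix)}=-\mathrm{tr}(H_u)\mathrm{tr}(H_v)$ and $L_{uv}^{\rm (sgm)}=-\mathrm{tr}(H_uH_v)$. Next, the product-to-sum identities $\cos a\cos b=\tfrac12[\cos(a-b)+\cos(a+b)]$ and $\sin a\sin b=\tfrac12[\cos(a-b)-\cos(a+b)]$, combined with the algebraic identity $\sum_iu_i^2v_i^2+\sum_{i\neq j}u_iu_jv_iv_j\epsilon_i\epsilon_j=S(\epsilon)^2$ with $S(\epsilon):=\sum_iu_iv_i\epsilon_i$, bring both derivatives into the parallel form
\begin{align*}
L_{uv}^{\rm (sgm)}=-\frac{1}{2^m}\sum_{\epsilon\in\{\pm1\}^m}S(\epsilon)^2\prod_j\cos(\pi(u_j+\epsilon_jv_j)x_j),\quad L_{uv}^{\rm (mix)}=-\frac{\|u\|^2\|v\|^2}{2^m}\sum_\epsilon\prod_j\cos(\pi(u_j+\epsilon_jv_j)x_j).
\end{align*}
Every $\epsilon$-contribution carries the same negative sign, and Cauchy--Schwarz gives $S(\epsilon)^2\leq\|u\|^2\|v\|^2$; collecting $\epsilon$'s producing the same Fourier mode $w$ yields $|c_w(L_{uv}^{\rm (sgm)})|\leq|c_w(L_{uv}^{\rm (mix)})|$ at every mode. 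Squaring, summing over $w\notin\cU\cup\{0\}$, and dividing by $J_{uu}J_{vv}$ produces $(\gamma_\cU^2)^{\rm (sgm)}\leq(\gamma_\cU^2)^{\rm (mix)}$. For strict positivity of the left-hand side, I would iterate $u\mapsto 2u$ inside $\cU$; finiteness of $\cU$ produces $u^\star\in\cU$ with $2u^\star\notin\cU$, and all $\epsilon$'s contributing to mode $2u^\star$ of $L_{u^\star u^\star}^{\rm (sgm)}$ give $S(\epsilon)^2=\|u^\star\|^4$, so the surviving Fourier coefficient $-\|u^\star\|^4/2^{|\sigma(u^\star)|}\neq 0$ forces $\tilde L_{u^\star u^\star}^{\rm (sgm)}\neq 0$.

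The hardest part will be the equality characterization. The forward direction is immediate: if $\cU\subset\mbZ_i$, every $H_u$ is rank-one with only its $(i,i)$-entry nonzero, so $\det(I+\sum_u\theta_uH_u)=1+\sum_u\theta_u\mathrm{tr}(H_u)$ and the two models literally coincide. For the converse, suppose $\cU\not\subset\mbZ_i$ for every $i$. If all $u\in\cU$ satisfy $|\sigma(u)|=1$ but $\cU$ meets $\mbZ_i$ and $\mbZ_j$ for some $i\neq j$, pick $u\in\cU\cap\mbZ_i$ and $v\in\cU\cap\mbZ_j$: their disjoint supports give $H_uH_v\equiv 0$, hence $L_{uv}^{\rm (sgm)}=0$, while $L_{uv}^{\rm (mix)}$ is a nonzero multiple of $\phi_{u+v}$ and $u+v\notin\cU$ since $|\sigma(u+v)|=2$, so $\tilde L_{uv}^{\rm (mix)}\neq 0$. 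Otherwise some $u\in\cU$ has $|\sigma(u)|\geq 2$, and I would pick $u_0\in\cU$ lexicographically maximizing first $|\sigma(u)|$ and then $\sum_k u_k$, fix $i\in\sigma(u_0)$, and set $w:=2u_{0,i}e_i$. If $w\notin\cU$, the pair $(u_0,u_0)$ at mode $w$ gives $S(\epsilon)^2=(2u_{0,i}^2-\|u_0\|^2)^2<\|u_0\|^4$, a strict Cauchy--Schwarz gap on a surviving mode. If $w\in\cU$, the pair $(u_0,w)$ produces the Fourier mode $w_+$ obtained from $u_0$ by tripling its $i$-th coordinate: $w_+$ has the same support as $u_0$ but strictly larger coordinate sum, so $w_+\notin\cU$ by the lex-maximality of $u_0$, and $S(\epsilon)^2=4u_{0,i}^4<\|u_0\|^2\cdot 4u_{0,i}^2=\|u_0\|^2\|w\|^2$ is again strict. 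Either way, a strict mode-level inequality persists and yields $(\gamma_\cU^2)^{\rm (sgm)}<(\gamma_\cU^2)^{\rm (mix)}$.
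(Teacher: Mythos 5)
Your proof is correct, and while the engine behind the inequality is the same as the paper's, your handling of the equality case is genuinely different. For the inequality itself, your mode-by-mode comparison of Fourier coefficients is the paper's argument in different clothing: the paper computes the residual $L_{uv}+J_{uv}-\sum_s\Gamma_{uv}^sL_s$ explicitly as the part of $L_{uv}$ supported on modes $\mathrm{abs}(U+V)\notin\cU\cup\{0\}$ (Lemma~\ref{lem:Q-expression}, equation (\ref{eqn:R-expression})) and then applies exactly your Cauchy--Schwarz bound; your $S(\epsilon)$ is the paper's $U^{\top}V$ under Bernoulli randomization, and your positivity witness (iterate $u\mapsto 2u$ until $2u^{\star}\notin\cU$) differs only cosmetically from the paper's (maximize $\|u\|_1$). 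The real divergence is in the converse of the equality characterization: the paper fixes $u$ with a maximal coordinate $u_i$, introduces the sign-flipped vector $\bar{u}$, shows $\omega_{\cU}(u,\bar{u},-u,-\bar{u})=1$, and invokes the \emph{equality case} of Cauchy--Schwarz twice to force first $u\in\mbZ_i$ and then every other $v\in\mbZ_i$; you instead run a case analysis on support sizes and exhibit explicit surviving modes ($u+v$ for disjoint singleton supports, and $2u_{0,i}e_i$ or its tripled variant $w_+$ otherwise) at which the coefficient inequality is strict. Both work: the paper's route is shorter and avoids casework, while yours is more constructive in that it names the mode responsible for the curvature gap, and your forward direction --- observing that $H_u$ has a single nonzero entry when $u\in\mbZ_i$, so $\det(I+\sum_u\theta_uH_u)=1+\sum_u\theta_u\mathop{\rm tr}H_u$ --- actually proves the theorem's final sentence that the two models coincide, which the paper asserts without spelling out. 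Two small points to make explicit in a write-up: the identity $\mathrm{E}[L_{uv}]=-J_{uv}$ is what makes $\tilde{L}_{uv}$ the orthogonal projection onto $V^{\perp}$ (the paper verifies this directly), and Parseval carries the weights $\|\phi_w\|^2=2^{-|\sigma(w)|}$, which are positive and common to both models, so the mode-wise domination does pass to $Q_{uv,uv}$ and then, since $J$ is diagonal with positive entries, to $\gamma^2$.
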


We give more explicit expression of the two quantities.
We prepare some additional notations.
For a vector $U=(U_i)\in\mbZ^m$,
its component-wise absolute value is denoted by
$\mathrm{abs}(U)=(|U_i|)$.
For two vectors $U=(U_i)$ and $V=(V_i)$,
their component-wise product (Hadamard product) is denoted by
$U\circ V=(U_iV_i)$.
Let $\beta=(\beta_i)\in\{-1,1\}^m$
be a Bernoulli sequence, that is,
$\beta_i$ independently takes the value $\pm 1$ with probability $1/2$ each.
For a Bernoulli sequence $\beta$ and a vector $u\in\cU$
we call the vector $U=\beta\circ u$
{\em Bernoulli randomization} of $u$.
The expectation with respect to $U$ (inherited from $\beta$)
is denoted as $\mathrm{E}_U$.
If Bernoulli randomization of two or more (possibly the same) vectors are considered,
then they are assumed to be independent.
Recall that $\|u\|=(\sum_{j=1}^mu_j^2)^{1/2}$
and $\sigma(u)=\{j\mid u_j>0\}$.

The explicit expression of the Efron curvature is given in the following theorem.
The inequality (\ref{eqn:main}) is obtained as a corollary.

\begin{theorem} \label{thm:main-explicit}
 The Efron curvature of SGM and MixM at $\theta=0$ is given by
 \begin{align}
  (\gamma_{\cU}^2)^{\rm (sgm)}
   &= \sum_{u,v\in\cU}\mathrm{E}_{U,V,\tilde{U},\tilde{V}}
   \left[\omega_{\cU}(U,V,\tilde{U},\tilde{V})
    2^{|\sigma(u)|+|\sigma(v)|}
    \frac{(U^{\top}V)^2(\tilde{U}^{\top}\tilde{V})^2}{\|u\|^4\|v\|^4}
   \right],
   \label{eqn:gamma-grad}
   \\
  (\gamma_{\cU}^2)^{\rm (mix)}
   &= \sum_{u,v\in\cU}\mathrm{E}_{U,V,\tilde{U},\tilde{V}}
   \left[\omega_{\cU}(U,V,\tilde{U},\tilde{V})2^{|\sigma(u)|+|\sigma(v)|}
   \right],
   \label{eqn:gamma-mix}
 \end{align}
 where $U,V,\tilde{U},\tilde{V}$ are 
 Bernoulli randomization of $u,v,u,v$, respectively, and
 \[
  \omega_{\cU}(U,V,\tilde{U},\tilde{V})
   = 1_{\{U+V+\tilde{U}+\tilde{V}=0,\ \mathrm{abs}(U+V)\notin\cU\cup\{0\}\}}.
 \]
 In particular, $(\gamma_{\cU}^2)^{\rm (sgm)}$ and $(\gamma_{\cU}^2)^{\rm (mix)}$ 
 are rational numbers.
\end{theorem}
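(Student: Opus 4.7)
Since the Fisher matrix is diagonal by Lemma~\ref{lem:score-fisher}, the curvature reduces to $\gamma^2 = \sum_{u,v\in\cU} Q_{uv,uv}J^{uu}J^{vv}$ with $J^{uu}J^{vv} = 2^{|\sigma(u)|+|\sigma(v)|}/(\|u\|^4\|v\|^4)$. My plan is to expand each $L_{uv}$ in the orthogonal cosine basis $e_k(x) := \prod_j\cos(\pi k_j x_j)$ on $[0,1]^m$, read off the residual $R_{uv}$ as the Fourier modes outside $\cU\cup\{0\}$, apply Parseval, and then use a Bernoulli-randomization identity to recover the stated formulas.

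The first step is a cosine expansion of $L_{uv}$. For MixM, $L_{uv}^{\mathrm{mix}} = -L_u L_v = -\|u\|^2\|v\|^2\,\mathrm{E}_V[e_{\mathrm{abs}(u+V)}]$ by the product-to-sum identity, where $V=\beta\circ v$ is a Bernoulli randomization. For SGM, I first note that $H_u(x) = \mathrm{E}_{U'}[U'U'^{\top}\cos(\pi U'\cdot x)]$, which follows by applying $D^2$ to $\prod_j\cos(\pi u_j x_j) = \mathrm{E}_{U'}[\cos(\pi U'\cdot x)]$. Using $\operatorname{tr}(aa^{\top}bb^{\top}) = (a^{\top}b)^2$ yields $\operatorname{tr}(H_u H_v) = \mathrm{E}_{U',V'}[(U'^{\top}V')^2\cos(\pi U'\cdot x)\cos(\pi V'\cdot x)]$; the componentwise change of variable $\gamma_j := \beta^{(u)}_j\beta^{(v)}_j$ together with the identity $\mathrm{E}_\beta[\cos(\pi(\beta\circ w)\cdot x)] = \prod_j\cos(\pi w_j x_j)$ then let me integrate out $\beta^{(v)}$, giving the key identity
\[
  \operatorname{tr}(H_u(x)H_v(x)) \;=\; \mathrm{E}_V\bigl[(u^{\top}V)^2\, e_{\mathrm{abs}(u+V)}(x)\bigr],
\]
so $L_{uv}^{\mathrm{sgm}} = -\mathrm{E}_V[(u^{\top}V)^2\, e_{\mathrm{abs}(u+V)}]$.

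Because $L_s = \|s\|^2 e_s$ for $s\in\cU$ and the $\{e_k\}$ are pairwise orthogonal with $\|e_k\|^2 = 2^{-|\sigma(k)|}$, the projection of $L_{uv}$ onto $\mathrm{span}\{1, L_s : s\in\cU\}$ simply keeps the Fourier modes indexed by $\cU\cup\{0\}$, so the residual is $R_{uv} = \sum_{k\notin\cU\cup\{0\}} \lambda_k e_k$ with $\lambda_k = -\mathrm{E}_V[c(u,V)\,1_{\{\mathrm{abs}(u+V)=k\}}]$ ($c=\|u\|^2\|v\|^2$ for MixM, $c=(u^{\top}V)^2$ for SGM). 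Parseval yields $Q_{uv,uv} = \sum_{k\notin\cU\cup\{0\}} \lambda_k^2\cdot 2^{-|\sigma(k)|}$. Now comes the crucial distributional identity: the same componentwise analysis with $\gamma_j=\beta^{(u)}_j\beta^{(v)}_j$ shows $(\mathrm{abs}(U+V), U^{\top}V) \stackrel{d}{=} (\mathrm{abs}(u+V), u^{\top}V)$ for an independent Bernoulli randomization $U=\beta^{(u)}\circ u$, so $\lambda_k = -\mathrm{E}_{U,V}[C(U,V)\,1_{\{\mathrm{abs}(U+V)=k\}}]$ with $C(U,V)=(U^{\top}V)^2$ in the SGM case (and $C=\|u\|^2\|v\|^2$ trivially for MixM). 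Squaring and introducing an independent pair $(\tilde U,\tilde V)$, I use the coordinate-wise sign symmetries of $(\tilde U,\tilde V)$ to observe that $\mathrm{E}_{\tilde U,\tilde V}[C(\tilde U,\tilde V)\,1_{\{\tilde U+\tilde V=W\}}]$ depends on $W$ only through $\mathrm{abs}(W)$; hence, on the event $\{\mathrm{abs}(U+V)=k\}$,
\[
  \mathrm{E}_{\tilde U,\tilde V}\bigl[C(\tilde U,\tilde V)\,1_{\{\mathrm{abs}(\tilde U+\tilde V)=k\}}\bigr] \;=\; 2^{|\sigma(k)|}\,\mathrm{E}_{\tilde U,\tilde V}\bigl[C(\tilde U,\tilde V)\,1_{\{\tilde U+\tilde V=-(U+V)\}}\bigr].
\]
Substituting and summing over $k\notin\cU\cup\{0\}$ cancels the $2^{-|\sigma(k)|}$ factor and merges the two $\mathrm{abs}$ constraints into $\{U+V+\tilde U+\tilde V=0\}$, producing $Q_{uv,uv}^{\mathrm{sgm}} = \mathrm{E}[\omega_{\cU}(U^{\top}V)^2(\tilde U^{\top}\tilde V)^2]$ and $Q_{uv,uv}^{\mathrm{mix}} = \|u\|^4\|v\|^4\,\mathrm{E}[\omega_{\cU}]$. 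Multiplying by $J^{uu}J^{vv}$ and summing over $u,v\in\cU$ then gives (\ref{eqn:gamma-grad}) and (\ref{eqn:gamma-mix}); rationality is immediate since each summand is a ratio of integers.

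I expect the main obstacle to be the key lemma $\operatorname{tr}(H_u H_v) = \mathrm{E}_V[(u^{\top}V)^2 e_{\mathrm{abs}(u+V)}]$ in paragraph two, since it is what extracts the algebraic weight $(u^{\top}V)^2$ from the matrix trace and casts SGM in the same scalar cosine framework as MixM. Once this identity is in place, the projection, Parseval, and coordinate-wise symmetry arguments reduce to careful bookkeeping.
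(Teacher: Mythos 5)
Your proof is correct and takes essentially the same route as the paper: your key trace identity is the paper's Lemma~\ref{lem:Bernoulli}, your observation that the residual $R_{uv}$ retains exactly the Fourier modes outside $\cU\cup\{0\}$ is the paper's identity (\ref{eqn:R-expression}), and your Parseval-plus-sign-symmetry step reproduces the paper's direct evaluation of $Q_{uv,uv}=\mathrm{E}_{\xi}[R_{uv}R_{uv}]$. The only difference is cosmetic: you work in the real cosine basis on $[0,1]^m$ and count the $2^{|\sigma(k)|}$ sign patterns of each frequency by hand, whereas the paper symmetrizes the domain to $[-1,1]^m$ and uses complex exponentials so that orthogonality reads $\mathrm{E}_{\xi}[\mathrm{e}^{\mathrm{i}\pi a^{\top}\xi}]=1_{\{a=0\}}$, which also lets it bypass your abstract projection argument by computing $(\Gamma_{uv}^{w})^{\rm (sgm)}$ and $(\Gamma_{uv}^{w})^{\rm (mix)}$ explicitly.
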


Table~\ref{table:numeric} shows the Efron curvature
for several specific cases of $\cU$.
Let $e_i=(1_{\{j\neq i\}})_{j=1}^m$, the $i$-th unit vector.

\begin{table}[htbp]
 \caption{The Efron curvature for several cases of $\cU$.}
 \label{table:numeric}
 \begin{center}
  \begin{tabular}{c|c|c}
   $\cU$& $(\gamma_{\cU}^2)^{\rm (sgm)}$& $(\gamma_{\cU}^2)^{\rm (mix)}$ \\
   \hline
   $\{fe_i\}_{1\leq f\leq d,1\leq i\leq m}$& $2^{-2}d(d+1)m$& $2^{-2}d(d+1)m+d^2m(m-1)$ \\
   $\{e_i+e_j\}_{1\leq i<j\leq m}$& $2^{-5}m(m-1)(m+2)$& $2^{-3}m(m-1)(2m^2-6m+9)$\\
   $\{e_i+e_{i+1}\}_{i=1}^{m-1}$& $2^{-4}(7m-10)$& $2^{-2}(4m^2-3m-5)$\\
   $\{e_1+e_i\}_{i=2}^m$& $2^{-5}(m-1)(3m+2)$& $2^{-2}(m-1)(6m-7)$\\
  \end{tabular}
 \end{center}
\end{table}

We end with an asymptotic property.
For the first three examples in Table~\ref{table:numeric},
it is easily confirmed that $(\gamma_{\cU}^2)^{\rm (sgm)}/(\gamma_{\cU}^2)^{\rm (mix)}$
converges to $0$ as $m\to\infty$.
This property holds in a more general setting.
We define two sets $M(\cU)$ and $N(\cU)$ by
\begin{align*}
 M(\cU)
  &= \left\{
  (u,v)\in\cU^2\mid u+v\not\in\cU
  \right\},
 \\
 N(\cU)
 &= \left\{
  (u,v)\in\cU^2\mid \sigma(u)\cap\sigma(v)\neq\emptyset
  \right\}.
\end{align*}
We denote cardinality of a set $A$ by $|A|$.

\begin{theorem} \label{thm:asymptotics}
 Let $\cU_m$ be a finite subset of $(\mbZ_{\geq 0}^m)^+$ for each $m\in\{1,2,\ldots\}$.
 Assume that $\max_{u\in\cU_m}|\sigma(u)|$ is bounded over $m$.
 Further assume $|N(\cU_m)|/|M(\cU_m)|\to 0$ as $m\to\infty$.
 Then $(\gamma_{\cU_m}^2)^{\rm (sgm)}/(\gamma_{\cU_m}^2)^{\rm (mix)}\to 
 0$ as $m\to\infty$.
\end{theorem}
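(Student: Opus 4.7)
The plan is to compare the explicit formulas from Theorem~\ref{thm:main-explicit} termwise. The decisive structural observation is that the SGM summand carries the extra factor $(U^{\top}V)^2(\tilde{U}^{\top}\tilde{V})^2/(\|u\|^4\|v\|^4)$, which vanishes identically whenever $\sigma(u)\cap\sigma(v)=\emptyset$ (since then $U_jV_j=0$ for every $j$), so the SGM sum is effectively supported on $N(\cU_m)$. The mixture sum has no such restriction. Setting $K:=\sup_m\max_{u\in\cU_m}|\sigma(u)|$, I will prove an upper bound of order $|N(\cU_m)|$ on the numerator and a lower bound of order $|M(\cU_m)|$ on the denominator; the hypothesis then closes the argument.

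For the upper bound on $(\gamma_{\cU_m}^2)^{\rm (sgm)}$ I would apply Cauchy--Schwarz: $(U^{\top}V)^2\leq\|U\|^2\|V\|^2=\|u\|^2\|v\|^2$ (norms are preserved by sign flips), and likewise for the tilde pair, so the product factor is at most $1$. Combined with $\mathrm{E}[\omega_{\cU_m}]\leq 1$ and $2^{|\sigma(u)|+|\sigma(v)|}\leq 4^K$, this yields
\[
(\gamma_{\cU_m}^2)^{\rm (sgm)}\ \leq\ 4^K\,|N(\cU_m)|.
\]

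For the lower bound on $(\gamma_{\cU_m}^2)^{\rm (mix)}$ I would use nonnegativity of the summands and restrict the sum to pairs $(u,v)\in M(\cU_m)\setminus N(\cU_m)$, i.e.\ pairs with disjoint supports and $u+v\notin\cU_m$. For such a pair, the relation $U+V+\tilde{U}+\tilde{V}=0$ decouples coordinatewise into $\tilde{U}=-U$ on $\sigma(u)$ and $\tilde{V}=-V$ on $\sigma(v)$, an event of probability exactly $2^{-|\sigma(u)|-|\sigma(v)|}$. On this event $\mathrm{abs}(U+V)=u+v$, which lies outside $\cU_m\cup\{0\}$ by construction, so the second clause in $\omega_{\cU_m}$ holds automatically. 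Hence each such summand equals $2^{|\sigma(u)|+|\sigma(v)|}\cdot 2^{-|\sigma(u)|-|\sigma(v)|}=1$, and consequently
\[
(\gamma_{\cU_m}^2)^{\rm (mix)}\ \geq\ |M(\cU_m)\setminus N(\cU_m)|\ \geq\ |M(\cU_m)|-|N(\cU_m)|.
\]

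Combining the two bounds with $|N(\cU_m)|/|M(\cU_m)|\to 0$, for large $m$ we have $|M(\cU_m)|-|N(\cU_m)|\geq |M(\cU_m)|/2$, so the ratio of curvatures is at most $2\cdot 4^K|N(\cU_m)|/|M(\cU_m)|\to 0$. The only step I expect to require care is the exact evaluation of $\mathrm{E}[\omega_{\cU_m}]$ on disjoint-support pairs, where one must check both the coordinatewise decoupling of the cancellation constraint and the automatic satisfaction of $\mathrm{abs}(U+V)\notin\cU_m\cup\{0\}$; everything else is bookkeeping with nonnegative indicators and the uniform bound on $|\sigma(u)|$.
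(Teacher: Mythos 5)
Your proof is correct and follows essentially the same strategy as the paper: show $(\gamma_{\cU_m}^2)^{\rm (sgm)}=O(|N(\cU_m)|)$ via the observation that $U^{\top}V=0$ off $N(\cU_m)$ together with Cauchy--Schwarz, and show $(\gamma_{\cU_m}^2)^{\rm (mix)}=\Omega(|M(\cU_m)|)$ via $\omega_{\cU_m}(u,v,-u,-v)=1$ on $M(\cU_m)$. The only (harmless) differences are that you track the $2^{|\sigma(u)|+|\sigma(v)|}$ factors explicitly in the upper bound and restrict the lower bound to $M(\cU_m)\setminus N(\cU_m)$ to get exactly $1$ per term, where the paper instead bounds each term over all of $M(\cU_m)$ below by $2^{-4d}$.
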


Let $\mu(\cU)$ be the set of maximal elements of $\cU$,
that is,
\[
 \mu(\cU) = \left\{
 u\in\cU\mid \forall v\in\cU\setminus\{u\},
 \ \exists i\in\{1,\ldots,m\}\ \mbox{s.t.}\ v_i<u_i
 \right\}.
\]

\begin{corollary} \label{coro:asymptotics-easy}
 Let $\cU_m$ be a finite subset of $(\mbZ_{\geq 0}^m)^+$ for each $m\in\{1,2,\ldots\}$.
 Assume that $\max_{u\in\cU_m}|\sigma(u)|$ is bounded over $m$.
 Further assume $|N(\cU_m)|/|\mu(\cU_m)|^2\to 0$ as $m\to\infty$.
 Then $(\gamma_{\cU_m}^2)^{\rm (sgm)}/(\gamma_{\cU_m}^2)^{\rm (mix)}\to 
 0$ as $m\to\infty$.
\end{corollary}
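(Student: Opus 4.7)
The plan is to deduce the corollary directly from Theorem~\ref{thm:asymptotics} by showing that the hypothesis $|N(\cU_m)|/|\mu(\cU_m)|^2\to 0$ implies the stronger-looking hypothesis $|N(\cU_m)|/|M(\cU_m)|\to 0$. The bridge is an inclusion of the form $\mu(\cU)\times\mu(\cU)\subseteq M(\cU)$, from which $|M(\cU_m)|\geq|\mu(\cU_m)|^2$ follows immediately.

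The key combinatorial step is the inclusion claim. Take any $(u,v)\in\mu(\cU)\times\mu(\cU)$. Since $u\in\cU\subseteq(\mbZ_{\geq 0}^m)^+$, the vector $u$ is non-negative and non-zero, so $u+v\geq v$ component-wise with strict inequality in at least one coordinate $i\in\sigma(u)$. If we had $u+v\in\cU$, then $u+v$ would be an element of $\cU\setminus\{v\}$ satisfying $(u+v)_j\geq v_j$ for every $j$, which contradicts the maximality of $v$ in $\cU$ (no index $i$ with $(u+v)_i<v_i$ exists). Hence $u+v\notin\cU$, i.e.\ $(u,v)\in M(\cU)$. This is the only mildly substantive step and is essentially a one-line poset argument.

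With the inclusion in hand, I would combine the estimate $|M(\cU_m)|\geq|\mu(\cU_m)|^2$ with the assumption $|N(\cU_m)|/|\mu(\cU_m)|^2\to 0$ to conclude
\[
\frac{|N(\cU_m)|}{|M(\cU_m)|}\ \leq\ \frac{|N(\cU_m)|}{|\mu(\cU_m)|^2}\ \longrightarrow\ 0.
\]
Together with the (unchanged) hypothesis that $\max_{u\in\cU_m}|\sigma(u)|$ is bounded, this reduces the corollary to Theorem~\ref{thm:asymptotics}, whose conclusion is precisely the desired limit $(\gamma_{\cU_m}^2)^{\rm (sgm)}/(\gamma_{\cU_m}^2)^{\rm (mix)}\to 0$.

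There is no real obstacle here; the corollary is a convenient packaging that trades a denominator that is sometimes awkward to compute ($|M(\cU_m)|$) for one that is typically easy to read off from $\cU_m$ ($|\mu(\cU_m)|^2$). The only point worth stating carefully in the write-up is the observation that every $u\in\cU$ is non-zero, so that adding any $u\in\mu(\cU)$ to a maximal $v$ strictly increases some coordinate and therefore takes it out of $\cU$.
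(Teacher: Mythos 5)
Your proof is correct and follows essentially the same route as the paper: both reduce the corollary to Theorem~\ref{thm:asymptotics} by showing that pairs of maximal elements lie in $M(\cU)$, hence $|M(\cU_m)|$ is at least roughly $|\mu(\cU_m)|^2$. The only (harmless) difference is that you include the diagonal pairs $(u,u)$ as well --- which is indeed valid, since $2u\in\cU$ would contradict maximality of $u$ --- giving the slightly cleaner bound $|M(\cU_m)|\geq|\mu(\cU_m)|^2$ and sparing you the paper's auxiliary remark that $|\mu(\cU_m)|\to\infty$.
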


Table~\ref{table:N-and-mu} shows the numbers $|N(\cU)|$ and $|\mu(\cU)|$
for the examples in Table~\ref{table:numeric}.
It is consistent with Corollary~\ref{coro:asymptotics-easy},
that is, $|N(\cU)|/|\mu(\cU)|^2\to 0$ only for the first three cases.

\begin{table}[htbp]
 \caption{The numbers $|N(\cU)|$ and $|\mu(\cU)|$.}
 \label{table:N-and-mu}
 \begin{center}
  \begin{tabular}{c|c|c}
   $\cU$& $|N(\cU)|$ &$|\mu(\cU)|$ \\
   \hline
   $\{fe_i\}_{1\leq f\leq d,1\leq i\leq m}$& $d^2m$& $m$\\
   $\{e_i+e_j\}_{1\leq i<j\leq m}$& $2^{-1}m(m-1)(2m-3)$& $2^{-1}m(m-1)$\\
   $\{e_i+e_{i+1}\}_{i=1}^{m-1}$& $3m-5$& $m-1$\\
   $\{e_1+e_i\}_{i=2}^m$& $(m-1)^2$& $m-1$\\
  \end{tabular}
 \end{center}
\end{table}

\section{Discussion} \label{sec:discussion}

We evaluated the Efron curvature of SGM and MixM (Theorem~\ref{thm:main-explicit})
and used it to show that SGM has smaller curvature than MixM (Theorem~\ref{thm:main}).
Here we give some unsolved problems.

In Table~\ref{table:numeric}, we listed explicit formulas of the Efron curvature
for specific $\cU$'s by using (\ref{eqn:gamma-grad}) and (\ref{eqn:gamma-mix}).
It is challenging to derive formulas for more practical sets, such as
\[
 \cU = \left\{u\in(\mbZ_{\geq 0}^m)^+
 \mid \|u\|_1\leq 3,\ \|u\|_{\infty}\leq 2
 \right\},
 \quad \|u\|_1=\sum_{j=1}^mu_j,
 \quad \|u\|_{\infty}=\max_{j}u_j.
\]
\cite{sei2010} used this set to analyze multivariate datasets.
For each small $m$, we can evaluate the curvature by direct computation.
However, the computation needs exponential complexity
with respect to the dimension $m$
as long as one uses (\ref{eqn:gamma-grad}) and (\ref{eqn:gamma-mix}).
Combinatorial methods may solve the problem.

We studied the {\em averaged} curvature $\gamma^2$.
Instead, one can consider a tensor $H_{uv}:=\sum_{w,z}Q_{uw,vz}J^{wz}$ appearing in Lemma~\ref{lem:loss},
which is called the embedding e-curvature (\cite{amari1985}).
Although an inequality $H_{uv}^{\rm (sgm)}\preceq H_{uv}^{\rm (mix)}$ is conjectured
by numerical study, it could not be proved.

In this paper, we only considered the curvature at the origin $\theta=0$.
The reason that we restrict comes from two different kinds of difficulty.
One is conceptual difficulty:
the probability densities (and score vectors) of SGM and MixM are different
except at $\theta=0$.
An approach may be to consider a local mixture model of SGM
at each point $\theta$ (\cite{marriott2002}).
The another kind of difficulty is computational one.
The expression of the Efron curvature at $\theta\neq 0$ of SGM
seems complicated. Even the Fisher information matrix $J_{uv}$
is not written in elementary functions in general.
However, the expression is written
at least in terms of integration of multi-dimensional rational functions
because $p(x|\theta)$ is a polynomial of $\theta_u$ and $z_j=\mathrm{e}^{\mathrm{i}\pi x_j}$.
Algebraic methods on integration may be helpful.

\section{Proofs} \label{section:proof}

\subsection{Proof of Lemma~\ref{lem:score-fisher} and Theorem~\ref{thm:main-explicit}}

We calculate the Efron curvature of SGM and MixM step-by-step.

For SGM,
we denote the quantities $L_{uv}(x)$, $\Gamma_{uv,w}$, $\Gamma_{uv}^{w}$,
$Q_{uv,wz}$, $\gamma^2$ in Section~\ref{section:Efron} by
$L_{uv}^{\rm (sgm)}(x)$, $\Gamma_{uv,w}^{\rm (sgm)}$, 
$(\Gamma_{uv}^{w})^{\rm (sgm)}$, 
$Q_{uv,wz}^{\rm (sgm)}$, $(\gamma^2)^{\rm (sgm)}$, respectively.
Similarly, for MixM, we denote
$L_{uv}^{\rm (mix)}(x)$, $\Gamma_{uv,w}^{\rm (mix)}$, 
$(\Gamma_{uv}^{w})^{\rm (mix)}$, $Q_{uv,wz}^{\rm (mix)}$, 
$(\gamma^2)^{\rm (mix)}$.
We use $L_u(x)$ and $J_{uv}$ without superscripts
because they are common in both models.
Recall that a random matrix $H_u=H_u(x)$ is defined by (\ref{eqn:H_u}).

\begin{lemma} \label{lem:score-H}
 For any $u,v\in\cU$, the following equality holds:
 \[
  L_u(x) = \mathop{\rm tr}H_u,
 \quad L_{uv}^{\rm (sgm)}(x) = -\mathop{\rm tr}(H_uH_v)
 \quad L_{uv}^{\rm (mix)}(x) = -(\mathop{\rm tr}H_u)(\mathop{\rm tr}H_v).
 \]
\end{lemma}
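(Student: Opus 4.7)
The plan is to read the three identities off directly from the representations recorded in (\ref{eqn:SGM-MixM-rewrite}), using only the chain rule and Jacobi's formula for the derivative of a log-determinant. The first step is algebraic: from the definition of $\psi$ in (\ref{eqn:SGM}) and the definition of $H_u$ in (\ref{eqn:H_u}), linearity of the Hessian operator gives $D^2\psi(x|\theta) = I + \sum_{u\in\cU}\theta_u H_u(x)$, so that $A(\theta):=D^2\psi(x|\theta)$ is affine in $\theta$, with $A(0)=I$. Simultaneously $p^{(\mathrm{mix})}(x|\theta) = 1 + \sum_u \theta_u\,\mathop{\rm tr}H_u(x)$ is also affine in $\theta$ with value $1$ at $\theta=0$. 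The fact that $A(\theta)$ and $p^{(\mathrm{mix})}(x|\theta)$ are affine in $\theta$ is what makes the second derivatives collapse to a single term in both cases.

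For MixM the computation is immediate. Applying the chain rule once gives $\Rd_{\theta_u}\log p^{(\mathrm{mix})} = \mathop{\rm tr}H_u / p^{(\mathrm{mix})}$, which at $\theta=0$ yields $L_u = \mathop{\rm tr}H_u$. Differentiating once more and using that the numerator is independent of $\theta$ gives $\Rd_{\theta_u}\Rd_{\theta_v}\log p^{(\mathrm{mix})} = -(\mathop{\rm tr}H_u)(\mathop{\rm tr}H_v)/(p^{(\mathrm{mix})})^2$, evaluated at $\theta=0$ to produce the claimed identity for $L_{uv}^{(\mathrm{mix})}$. For SGM, Jacobi's formula gives $\Rd_{\theta_u}\log\det A(\theta) = \mathop{\rm tr}(A(\theta)^{-1}H_u)$, which at $\theta=0$ gives $L_u = \mathop{\rm tr}H_u$ (and in particular matches the MixM score, as asserted in Lemma~\ref{lem:score-fisher}). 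Differentiating again, and using $\Rd_{\theta_v}A(\theta)^{-1} = -A(\theta)^{-1}H_v A(\theta)^{-1}$ together with $\Rd_{\theta_u}\Rd_{\theta_v}A(\theta)=0$ (the key consequence of affineness), we obtain $\Rd_{\theta_u}\Rd_{\theta_v}\log\det A(\theta) = -\mathop{\rm tr}(A(\theta)^{-1}H_v A(\theta)^{-1}H_u)$, which at $\theta=0$ is $-\mathop{\rm tr}(H_u H_v)$.

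There is no real obstacle here; the argument is a one-line application of Jacobi's formula plus a one-line chain rule computation. The only point deserving explicit mention is that the second mixed partial $\Rd_{\theta_u}\Rd_{\theta_v}A(\theta)$ vanishes, so that no stray term involving a Hessian of $A$ in $\theta$ survives in the SGM calculation; once this is observed, the three formulas line up and the lemma is proved.
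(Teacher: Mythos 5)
Your proof is correct and is essentially the paper's argument in different packaging: the paper reads $L_u$ and $L_{uv}$ off the second-order Taylor expansions of $\log\det(I+\sum_u\theta_uH_u)$ and $\log(1+\sum_u\theta_u\mathop{\rm tr}H_u)$ at $\theta=0$, while you obtain the same coefficients by differentiating twice via Jacobi's formula and the chain rule. Both routes rest on the same structure of (\ref{eqn:SGM-MixM-rewrite}), and your explicit remark that the affineness of $A(\theta)$ kills the $\Rd_{\theta_u}\Rd_{\theta_v}A$ term is exactly the point that makes the expansion terminate as the paper claims.
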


\begin{proof}
 By (\ref{eqn:SGM-MixM-rewrite}),
 the log-likelihood of SGM and MixM
 are expanded around $\theta=0$ as
 \begin{align*}
 \log p^{\rm (sgm)}(x|\theta)
  &= \sum_{u\in\cU}\theta_u\mathop{\rm tr}H_u
  -\frac{1}{2}\sum_{u,v\in\cU}\theta_u\theta_v\mathop{\rm tr}(H_uH_v)
  +{\rm O}(\|\theta\|^3),
  \\
  \log p^{\rm (mix)}(x|\theta)
  &= \sum_{u\in\cU}\theta_u\mathop{\rm tr}H_u
  -\frac{1}{2}\sum_{u,v\in\cU}\theta_u\theta_v(\mathop{\rm tr}H_u)
  (\mathop{\rm tr}H_v)
  +{\rm O}(\|\theta\|^3).
 \end{align*}
 Then the result follows. 
\end{proof}

Since the random variables $L_u(x)$, $L_{uv}^{\rm (sgm)}(x)$ and 
$L_{uv}^{\rm (mix)}(x)$ are written in terms of $H_u$,
it is valuable to consider moment formulas of $H_u$.

\begin{lemma} \label{lem:Bernoulli}
 Let $u\in\cU$.
 Let $U$ be Bernoulli randomization of $u$.
 Then $H_u$ is written as $H_u=\mathrm{E}_U[\mathrm{e}^{\mathrm{i}\pi U^{\top}x}UU^{\top}]$.
 Furthermore, the random variable $x$ can be replaced with
 a random variable $\xi$ uniformly distributed on $[-1,1]^m$,
 when any moment of $\mathop{\rm tr}(H_u)$ and $\mathop{\rm tr}(H_uH_v)$
 is evaluated.
\end{lemma}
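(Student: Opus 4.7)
My plan is to treat the two assertions in turn; the first is a direct unpacking via Euler's formula, and the second rests on an evenness observation for the trace expressions.

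For the identity $H_u = \mathrm{E}_U[\mathrm{e}^{\mathrm{i}\pi U^\top x}\,UU^\top]$, I would start from Euler's formula $\cos(\pi u_jx_j) = \tfrac12(\mathrm{e}^{\mathrm{i}\pi u_jx_j} + \mathrm{e}^{-\mathrm{i}\pi u_jx_j})$. Taking the product over $j$ collapses the product into an average over the $2^m$ sign patterns $\beta\in\{-1,1\}^m$, which is precisely the expectation over the Bernoulli randomization $U = \beta\circ u$; this gives $\prod_j \cos(\pi u_jx_j) = \mathrm{E}_U[\mathrm{e}^{\mathrm{i}\pi U^\top x}]$. Interchanging the Hessian $D^2$ with this finite expectation, together with $D^2\mathrm{e}^{\mathrm{i}\pi U^\top x} = -\pi^2 UU^\top\mathrm{e}^{\mathrm{i}\pi U^\top x}$ and the prefactor $-\pi^{-2}$ from definition (\ref{eqn:H_u}), yields the claim.

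For the second assertion, my plan is to establish that both $\mathrm{tr}(H_u)$ and $\mathrm{tr}(H_uH_v)$ are even in each coordinate $x_j$, so that any moment, being a polynomial in them, is itself even in every coordinate; the fold-over identity $\int_{[0,1]^m}F(x)\,\mathrm{d}x = 2^{-m}\int_{[-1,1]^m}F(\xi)\,\mathrm{d}\xi$ for such $F$ then furnishes the desired replacement of $x$ by $\xi$. Evenness of $\mathrm{tr}(H_u) = \|u\|^2\prod_j\cos(\pi u_jx_j)$ is immediate. For $\mathrm{tr}(H_uH_v) = \mathrm{E}_{U,V}[(U^\top V)^2\mathrm{e}^{\mathrm{i}\pi(U+V)^\top x}]$ I would invoke the symmetry of the joint Bernoulli distribution under the coordinate-wise flip $(U_j,V_j)\mapsto(-U_j,-V_j)$, which leaves $(U^\top V)^2$ invariant but conjugates only the $j$-th exponential factor $\mathrm{e}^{\mathrm{i}\pi(U_j+V_j)x_j}$; the value of the expression at $x_j$ is thereby forced to coincide with its value at $-x_j$.

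The only delicate step is the symmetry argument for $\mathrm{tr}(H_uH_v)$; as a backup I would read evenness directly off the explicit entries of $H_u$, noting that each off-diagonal $(H_u)_{ij}$ with $i\neq j$ carries a factor $\sin(\pi u_ix_i)\sin(\pi u_jx_j)$, so in every summand $(H_u)_{ij}(H_v)_{ji}$ of the trace these sines appear in matched pairs per coordinate, leaving an even function overall. I do not foresee a more serious obstacle.
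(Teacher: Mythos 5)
Your proof is correct and rests on the same two ingredients as the paper's: Euler's formula to obtain $H_u=\mathrm{E}_U[\mathrm{e}^{\mathrm{i}\pi U^{\top}x}UU^{\top}]$, and invariance of the trace expressions under coordinate-wise sign flips to pass from $[0,1]^m$ to $[-1,1]^m$. The only difference is presentational: the paper packages the symmetry as a probabilistic coupling $\xi=\tilde{\beta}\circ x$ with the auxiliary signs absorbed into the Bernoulli randomizations via $\tilde{U}^{\top}\tilde{V}=U^{\top}V$, whereas you prove the integrand is even in each coordinate and apply the fold-over identity --- the same argument in deterministic form.
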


\begin{proof}
 By Euler's formula $\cos\phi=(\mathrm{e}^{\mathrm{i}\pi\phi}+\mathrm{e}^{-\mathrm{i}\pi\phi})/2$,
 we obtain
 \[
  \prod_{j=1}^m\cos(\pi u_jx_j) = \mathrm{E}_U[\mathrm{e}^{\mathrm{i}\pi U^{\top}x}].
 \]
 Therefore $H_u=\mathrm{E}_U[\mathrm{e}^{\mathrm{i}\pi U^{\top}x}UU^{\top}]$.
 Next we consider moments.
 Consider, for example, expectation of
 $\mathop{\rm tr}(H_uH_v)$.
 The other moments are similarly evaluated.
 Let $\tilde{\beta}$ be a Bernoulli sequence,
 which is independent of $x$ and any other Bernoulli sequences.
 Put $\xi=\tilde{\beta}\circ x$.
 Then $\xi$ has the uniform distribution on $[-1,1]^m$, and
 \begin{align*}
  \mathrm{E}_{\xi}[\mathop{\rm tr}(H_u(\xi)H_v(\xi))]
  &= \mathrm{E}_{\xi,U,V}[\mathrm{e}^{\mathrm{i}\pi U^{\top}\xi}
  \mathrm{e}^{\mathrm{i}\pi V^{\top}\xi}
  \mathop{\rm tr}(UU^{\top}VV^{\top})]
  \\
  &= \mathrm{E}_{\xi,U,V}[\mathrm{e}^{\mathrm{i}\pi U^{\top}\xi}
  \mathrm{e}^{\mathrm{i}\pi V^{\top}\xi}(U^{\top}V)^2]
  \\
  &= \mathrm{E}_{x,\tilde{\beta},U,V}
  [\mathrm{e}^{\mathrm{i}\pi (U\circ\tilde{\beta})^{\top}x}
  \mathrm{e}^{\mathrm{i}\pi (V\circ\tilde{\beta})^{\top}x}
  (U^{\top}V)^2]
  \\
  &= \mathrm{E}_{x,\tilde{U},\tilde{V}}
  [\mathrm{e}^{\mathrm{i}\pi \tilde{U}^{\top}x}
  \mathrm{e}^{\mathrm{i}\pi \tilde{V}^{\top}x}
  (\tilde{U}^{\top}\tilde{V})^2]
  \\
  &= \mathrm{E}_{x}[\mathop{\rm tr}(H_u(x)H_v(x))],
 \end{align*}
 where we put $\tilde{U}=U\circ\tilde{\beta}$ and $\tilde{V}=V\circ\tilde{\beta}$,
 and used an identity $\tilde{U}^{\top}\tilde{V}=U^{\top}V$.
\end{proof}

From Lemma~\ref{lem:Bernoulli}, we simply write
$H_u=\mathrm{E}_U[\mathrm{e}^{\mathrm{i}\pi U^{\top}\xi}UU^{\top}]$ below
and the expectation with respect to $x$
is replaced with the expectation with respect to $\xi$.
Note that $\mathrm{E}_{\xi}[\mathrm{e}^{\mathrm{i}\pi a^{\top}\xi}]=1_{\{a=0\}}$
for any $a\in\mbZ^m$.

Now the Fisher information matrix is evaluated as
\begin{align*}
 J_{uv}
 & 
 = \mathrm{E}_{\xi}[\mathop{\rm tr}H_u\mathop{\rm tr}H_v]
 \\
 &= \mathrm{E}_{\xi,U,V}[\mathrm{e}^{\mathrm{i}\pi (U+V)^{\top}\xi}\|u\|^2\|v\|^2]
 \\
 &= \mathrm{E}_{U,V}[1_{\{U+V=0\}}\|u\|^2\|v\|^2]
 \\
 &= 
 \mathrm{E}_{\beta,\tilde{\beta}}[
 1_{\{\beta\circ u=-\tilde{\beta}\circ v\}}]\|u\|^2\|v\|^2
 \\
 &= \mathrm{E}_{\beta,\tilde{\beta}}\left[\prod_{i=1}^m
   \{1_{\{u_i=v_i=0\}}+1_{\{u_i=v_i>0,\beta_i=-\tilde{\beta}_i\}}\}\right]\|u\|^2\|v\|^2
 \\
 &=  1_{\{u=v\}}2^{-|\sigma(u)|}\|u\|^4,
\end{align*}
where $\beta$ and $\tilde{\beta}$ are Bernoulli sequences.
This proves Lemma~\ref{lem:score-fisher}.
By similar computation, we have the following lemma.

\begin{lemma} \label{lem:e-connection}
 Let $U,V,S$ be Bernoulli randomization of $u,v,s\in\cU$.
 Then
 \begin{align*}
  (\Gamma_{uv}^w)^{\rm (sgm)}
  &= -\mathrm{E}_{U,V}[1_{\{\mathrm{abs}(U+V)=w\}}(U^{\top}V)^2\|w\|^{-2}],
  \\
  (\Gamma_{uv}^w)^{\rm (mix)}
  &= -\mathrm{E}_{U,V}[1_{\{\mathrm{abs}(U+V)=w\}}\|u\|^2\|v\|^2\|w\|^{-2}],
 \end{align*}
\end{lemma}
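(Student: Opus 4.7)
The plan is to compute $\Gamma_{uv,w}$ first for each model by expanding the log-likelihood derivatives via Lemma~\ref{lem:score-H}, then pass to the raised-index version $\Gamma_{uv}^w$ by inverting the Fisher information from Lemma~\ref{lem:score-fisher}. Since Lemma~\ref{lem:score-fisher} gives $J_{sw}=1_{\{s=w\}}2^{-|\sigma(w)|}\|w\|^4$, the inverse is diagonal with $J^{sw}=1_{\{s=w\}}2^{|\sigma(w)|}\|w\|^{-4}$, so the sum $\Gamma_{uv}^w=\sum_s\Gamma_{uv,s}J^{sw}$ collapses to a single term $\Gamma_{uv,w}J^{ww}$. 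This collapse is what makes the final formulas clean.

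For SGM I would write $\Gamma_{uv,w}^{\rm (sgm)}=\mathrm{E}_\xi[L_{uv}^{\rm (sgm)}L_w]=-\mathrm{E}_\xi[\mathop{\rm tr}(H_uH_v)\mathop{\rm tr}(H_w)]$ via Lemma~\ref{lem:score-H}, and then insert the Bernoulli representations from Lemma~\ref{lem:Bernoulli}: $H_u=\mathrm{E}_U[e^{\mathrm{i}\pi U^\top\xi}UU^\top]$, $H_v=\mathrm{E}_V[e^{\mathrm{i}\pi V^\top\xi}VV^\top]$, and $\mathop{\rm tr}H_w=\|w\|^2\mathrm{E}_W[e^{\mathrm{i}\pi W^\top\xi}]$. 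Pulling the $\xi$-expectation inside and using $\mathrm{E}_\xi[e^{\mathrm{i}\pi a^\top\xi}]=1_{\{a=0\}}$ gives $\Gamma_{uv,w}^{\rm (sgm)}=-\|w\|^2\mathrm{E}_{U,V,W}[1_{\{U+V+W=0\}}(U^\top V)^2]$. For MixM the derivation is the same except that $L_{uv}^{\rm (mix)}=-(\mathop{\rm tr}H_u)(\mathop{\rm tr}H_v)$ contributes the scalar factor $\|u\|^2\|v\|^2$ instead of $(U^\top V)^2$, yielding $\Gamma_{uv,w}^{\rm (mix)}=-\|u\|^2\|v\|^2\|w\|^2\mathrm{E}_{U,V,W}[1_{\{U+V+W=0\}}]$.

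The key auxiliary computation, which I would state as a small sublemma, is that for $W$ a Bernoulli randomization of $w$ independent of $U,V$, the marginal probability of $W=-(U+V)$ equals $2^{-|\sigma(w)|}1_{\{\mathrm{abs}(U+V)=w\}}$, since $W$ is uniform on the $2^{|\sigma(w)|}$ sign-flips of $w$ and the event is nonempty exactly when $|U_i+V_i|=w_i$ for every $i$. Marginalizing $W$ out of the indicator thus replaces $1_{\{U+V+W=0\}}$ with $2^{-|\sigma(w)|}1_{\{\mathrm{abs}(U+V)=w\}}$. Multiplying $\Gamma_{uv,w}$ by $J^{ww}=2^{|\sigma(w)|}\|w\|^{-4}$ then cancels the Bernoulli factor $2^{-|\sigma(w)|}$ and converts the $\|w\|^2$ from $\mathop{\rm tr}H_w$ into $\|w\|^{-2}$, producing the claimed expressions for $(\Gamma_{uv}^w)^{\rm (sgm)}$ and $(\Gamma_{uv}^w)^{\rm (mix)}$.

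There is no deep obstacle here; the argument is a direct moment computation parallel to the Fisher-information calculation already carried out before the lemma. The only place to be careful is the Bernoulli-randomization bookkeeping for $W$, specifically verifying that the factor $2^{-|\sigma(w)|}$ produced when $W$ is summed out is exactly the inverse of $J^{ww}$'s Bernoulli factor, so that these cancellations leave the clean $\|w\|^{-2}$ weighting in the final formulas. A minor notational point worth flagging at the start of the proof is that the variable $S$ in the lemma statement plays no role beyond naming the summation index in $\Gamma_{uv}^w=\sum_s\Gamma_{uv,s}J^{sw}$, and is eliminated by the diagonality of $J$.
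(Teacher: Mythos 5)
Your proposal is correct and follows essentially the same route as the paper: compute $\Gamma_{uv,s}$ from Lemma~\ref{lem:score-H} via the Bernoulli representation, use diagonality of $J$ to collapse the sum over $s$, and marginalize out $W$ so that the resulting factor $2^{-|\sigma(w)|}$ cancels the $2^{|\sigma(w)|}$ from $J^{ww}$. The bookkeeping you flag as the only delicate point is exactly the step the paper carries out with the substitution $W=\beta\circ w$.
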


\begin{proof}
 We first calculate $\Gamma_{uv,s}^{\rm (sgm)}$.
 By Lemma~\ref{lem:score-H} and Lemma~\ref{lem:Bernoulli}, we have
 \begin{align*}
  \Gamma_{uv,s}^{\rm (sgm)}
  &= -\mathrm{E}_{\xi}[\mathop{\rm tr}(H_uH_v)\mathop{\rm tr}H_s]
   \\
  &= -\mathrm{E}_{\xi,U,V,S}
   \left[
    \mathrm{e}^{\mathrm{i}\pi (U+V+S)^{\top}\xi}
    (U^{\top}V)^2\|s\|^2
   \right]
   \\
  &= -\mathrm{E}_{U,V,S}
   \left[
    1_{\{U+V+S=0\}}(U^{\top}V)^2\|s\|^2
   \right].
 \end{align*}
 By using the expression of $\Gamma_{uv,s}^{\rm (sgm)}$ and $J^{sw}$, we have
 \begin{align*}
  (\Gamma_{uv}^w)^{\rm (sgm)}
   &= \sum_{s\in\cU}\Gamma_{uv,s}^{\rm (sgm)}J^{sw}
   \\
   &= -\sum_{s\in\cU}
   \mathrm{E}_{U,V,S}
   \left[
    1_{\{U+V+S=0\}}(U^{\top}V)^2\|s\|^2
   \right]
   1_{\{s=w\}}2^{|\sigma(s)|}\|s\|^{-4}
   \\
   &= 
  -\mathrm{E}_{U,V,W}
   \left[
    1_{\{U+V+W=0\}}(U^{\top}V)^2\|w\|^{-2}2^{|\sigma(w)|}
   \right]
   \\
  &= 
  -\mathrm{E}_{U,V,\beta}
   \left[
    1_{\{\mathrm{abs}(U+V)=w\}}1_{\{U+V=\beta\circ w\}}(U^{\top}V)^2\|w\|^{-2}
  2^{|\sigma(w)|}
   \right],
  \\
  &=
  -\mathrm{E}_{U,V}
   \left[
    1_{\{\mathrm{abs}(U+V)=w\}}(U^{\top}V)^2\|w\|^{-2}
   \right],
 \end{align*}
 where $\beta$ is a Bernoulli sequence.
 The expression of $\Gamma_{uv,s}^{\rm (mix)}$ and 
 $(\Gamma_{uv}^{w})^{\rm (mix)}$ is obtained similarly.
\end{proof}

\begin{lemma} \label{lem:Q-expression}
 The curvature tensor of SGM and MixM at $\theta=0$ is
 \begin{align*}
  Q_{uv,wz}^{\rm (sgm)}
   &= \mathrm{E}_{U,V,W,Z}
   \left[
    \omega_{\cU}(U,V,W,Z)
    (U^{\top}V)^2(W^{\top}Z)^2
   \right],
  \\
  Q_{uv,wz}^{\rm (mix)}
   &= \mathrm{E}_{U,V,W,Z}
   \left[
    \omega_{\cU}(U,V,W,Z)
    \|u\|^2\|v\|^2\|w\|^2\|z\|^2
   \right],
 \end{align*}
 respectively, where $U,V,W,Z$ are Bernoulli randomization of $u,v,w,z$
 and
 \[
 \omega_{\cU}(U,V,W,Z) = 1_{\{U+V+W+Z=0,\ \mathrm{abs}(U+V)\notin\cU\cup\{0\}\}}.
 \]
\end{lemma}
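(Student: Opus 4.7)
The plan is to identify
\[
R_{uv} \ :=\ L_{uv} + J_{uv} - \sum_{s\in\cU}\Gamma_{uv}^s L_s
\]
with the $L^2([-1,1]^m)$-residual of $L_{uv}$ after projecting out the span of $\{1\}\cup\{L_s:s\in\cU\}$, and then to compute $Q_{uv,wz}=\mathrm{E}_\xi[R_{uv}R_{wz}]$ by Parseval in the Fourier basis $\{\mathrm{e}^{\mathrm{i}\pi K^\top\xi}\}_{K\in\mbZ^m}$. The first step uses Lemma~\ref{lem:Bernoulli} to expand every ingredient as a Fourier series in $\xi$: write $-L_{uv}^{\mathrm{(sgm)}}=\mathrm{E}_{U,V}[\mathrm{e}^{\mathrm{i}\pi(U+V)^\top\xi}(U^\top V)^2]$, so its Fourier coefficient at mode $K$ is $-g_{uv}(K)$ with $g_{uv}(K):=\mathrm{E}_{U,V}[1_{\{U+V=K\}}(U^\top V)^2]$; analogously for MixM with $\|u\|^2\|v\|^2$ in place of $(U^\top V)^2$. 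The essential symmetry to record is that under $(U,V)\mapsto(\tau\circ U,\tau\circ V)$ for any Bernoulli $\tau$, $(U^\top V)^2$ is invariant while $U+V\mapsto\tau\circ(U+V)$, so $g_{uv}(K)$ depends on $K$ only through $\mathrm{abs}(K)$.

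The heart of the argument is to show that the projection term reproduces $-L_{uv}$ exactly on the modes with $\mathrm{abs}(K)\in\cU$. Since $L_s=\mathrm{E}_S[\mathrm{e}^{\mathrm{i}\pi S^\top\xi}\|s\|^2]$ is supported on the $2^{|\sigma(s)|}$ modes $K$ with $\mathrm{abs}(K)=s$, each with coefficient $2^{-|\sigma(s)|}\|s\|^2$, the Fourier coefficient of $\sum_{s\in\cU}\Gamma_{uv}^sL_s$ at mode $K$ equals $\Gamma_{uv}^{\mathrm{abs}(K)}\,2^{-|\sigma(K)|}\|K\|^2$ whenever $\mathrm{abs}(K)\in\cU$ and $0$ otherwise. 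Substituting Lemma~\ref{lem:e-connection} and using the identity $\mathrm{E}_{U,V}[1_{\{\mathrm{abs}(U+V)=s\}}(U^\top V)^2]=2^{|\sigma(s)|}g_{uv}(K)$ (valid for any $K$ with $\mathrm{abs}(K)=s$ by the symmetry above), this coefficient collapses to exactly $-g_{uv}(K)$, and the MixM case is identical with $\|u\|^2\|v\|^2$ in place of $(U^\top V)^2$. Consequently the constant mode of $R_{uv}^{\mathrm{(sgm)}}$ cancels against $+J_{uv}=g_{uv}(0)$, every mode with $\mathrm{abs}(K)\in\cU$ cancels against the projection term, and only the modes with $\mathrm{abs}(K)\notin\cU\cup\{0\}$ survive, yielding
\[
R_{uv}^{\mathrm{(sgm)}}(\xi) \ =\ -\mathrm{E}_{U,V}\!\left[1_{\{\mathrm{abs}(U+V)\notin\cU\cup\{0\}\}}(U^\top V)^2\,\mathrm{e}^{\mathrm{i}\pi(U+V)^\top\xi}\right],
\]
and the analogous formula for $R_{uv}^{\mathrm{(mix)}}$.

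Parseval then closes the argument: $Q_{uv,wz}=\mathrm{E}_\xi[R_{uv}R_{wz}]$ expands into an expectation over the four independent Bernoulli randomizations $U,V,W,Z$ of $u,v,w,z$ whose $\xi$-integral forces $\mathrm{E}_\xi[\mathrm{e}^{\mathrm{i}\pi(U+V+W+Z)^\top\xi}]=1_{\{U+V+W+Z=0\}}$. On this diagonal $\mathrm{abs}(U+V)=\mathrm{abs}(W+Z)$, so the two ``$\mathrm{abs}\notin\cU\cup\{0\}$'' conditions coincide and leave the single factor $\omega_\cU(U,V,W,Z)$; the resulting integrands $(U^\top V)^2(W^\top Z)^2$ for SGM and $\|u\|^2\|v\|^2\|w\|^2\|z\|^2$ for MixM are exactly the claimed formulas. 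The main obstacle is the projection step: one must carry out $-\sum\Gamma_{uv}^sL_s$ in the Bernoulli language so that it matches $-L_{uv}$ term by term on the bad modes. Conceptually this is just the projection formula $\Gamma_{uv}^s=\Gamma_{uv,s}/J_{ss}$ (which simplifies because $J$ is diagonal), but translating it cleanly requires both the sign-flip symmetry of $g_{uv}$ and careful bookkeeping of the $2^{|\sigma(s)|}$ multiplicity factor arising from summing over the sign-flips of a mode $K$ with $\mathrm{abs}(K)=s$.
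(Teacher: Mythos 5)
Your argument is correct and essentially coincides with the paper's proof: both establish the residual formula $R_{uv}=-\mathrm{E}_{U,V}[1_{\{\mathrm{abs}(U+V)\notin\cU\cup\{0\}\}}\mathrm{e}^{\mathrm{i}\pi(U+V)^{\top}\xi}(U^{\top}V)^2]$ by showing that the term $\sum_{s}\Gamma_{uv}^sL_s$ cancels $L_{uv}$ on the modes with $\mathrm{abs}(K)\in\cU$ while $J_{uv}$ cancels the constant mode, and then integrate $R_{uv}R_{wz}$ over $\xi$ so that $U+V+W+Z=0$ forces $\mathrm{abs}(U+V)=\mathrm{abs}(W+Z)$ and the two indicator conditions merge into $\omega_{\cU}$. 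Your Fourier-coefficient/Parseval phrasing is just a reorganization of the paper's direct Bernoulli-expectation computation, with the same bookkeeping of the $2^{|\sigma(s)|}$ multiplicity.
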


\begin{proof}
 We only derive the expression of $Q_{uv,wz}^{\rm (sgm)}$.
 The expression of $Q_{uv,wz}^{\rm (mix)}$ is obtained similarly.
 We first prove
 \begin{align}
  R_{uv}^{\rm (sgm)}(x)
  &:=L_{uv}^{\rm (sgm)}(x)
   +J_{uv}-\sum_{s\in\cU}(\Gamma_{uv}^s)^{\rm (sgm)} L_s(x)
  \nonumber\\
  &= -\mathrm{E}_{U,V}
  \left[
   1_{\{\mathrm{abs}(U+V)\notin\cU\cup\{0\}\}}\mathrm{e}^{\mathrm{i}\pi(U+V)^{\top}\xi}(U^{\top}V)^2
  \right].
  \label{eqn:R-expression}
 \end{align}
 The last term of $R_{uv}^{\rm (sgm)}(x)$ is
 \begin{align*}
  -\sum_{s\in\cU}(\Gamma_{uv}^s)^{\rm (sgm)}L_s
   &=
  \sum_{s\in\cU}\mathrm{E}_{U,V,S}
  \left[
   1_{\{\mathrm{abs}(U+V)=s\}}\mathrm{e}^{\mathrm{i}\pi S^{\top}\xi}
   (U^{\top}V)^2
  \right]
   \\
  &=
  \sum_{s\in\cU}\mathrm{E}_{U,V,\beta}
  \left[
  1_{\{\mathrm{abs}(U+V)=s\}}\mathrm{e}^{\mathrm{i}\pi (\beta\circ(U+V))^{\top}\xi}
  (U^{\top}V)^2
  \right]
  \\
   &=
  \mathrm{E}_{U,V,\beta}
  \left[
   1_{\{\mathrm{abs}(U+V)\in\cU\}}\mathrm{e}^{\mathrm{i}\pi (\beta\circ(U+V))^{\top}\xi}
   (U^{\top}V)^2
  \right]
   \\
   &=
  \mathrm{E}_{U,V}
  \left[
   1_{\{\mathrm{abs}(U+V)\in\cU\}}\mathrm{e}^{\mathrm{i}\pi (U+V)^{\top}\xi}
   (U^{\top}V)^2
  \right],
 \end{align*}
 where $\beta$ is a Bernoulli sequence.
 For the first and second term of $R_{uv}^{\rm (sgm)}(x)$, we have
 \begin{align*}
  L_{uv}^{\rm (sgm)}
  &= -\mathrm{E}_{U,V}
  \left[
  \mathrm{e}^{\mathrm{i}\pi(U+V)^{\top}\xi}(U^{\top}V)^2
  \right],
  \\
  J_{uv}
  &= \mathrm{E}_{U,V}
  \left[
  1_{\{U+V=0\}}(U^{\top}V)^2
  \right]
  = \mathrm{E}_{U,V}
  \left[
  1_{\{\mathrm{abs}(U+V)=0\}}\mathrm{e}^{\mathrm{i}\pi(U+V)^{\top}\xi}(U^{\top}V)^2
  \right].
 \end{align*}
 Hence (\ref{eqn:R-expression}) is obtained.
 Now the tensor $Q_{uv,wz}^{\rm (sgm)}$ is calculated as follows:
 \begin{align*}
  Q_{uv,wz}^{\rm (sgm)}
   &= \mathrm{E}_{\xi}[R_{uv}^{\rm (sgm)}R_{wz}^{\rm (sgm)}]
  \\
  &= \mathrm{E}_{\xi,U,V,W,Z}
  \left[
  1_{\{\mathrm{abs}(U+V)\notin\cU\cup\{0\},\mathrm{abs}(W+Z)\notin\cU\cup\{0\}\}}
  \mathrm{e}^{\mathrm{i}\pi(U+V+W+Z)^{\top}\xi}
  (U^{\top}V)^2(W^{\top}Z)^2
  \right]
  \\
  &= \mathrm{E}_{U,V,W,Z}
  \left[
  1_{\{U+V+W+Z=0,\mathrm{abs}(U+V)\notin\cU\cup\{0\}\}}
  (U^{\top}V)^2(W^{\top}Z)^2
  \right]
 \end{align*}
 Therefore we obtain the desired expression.
\end{proof}

 We finally prove Theorem~\ref{thm:main-explicit}.
 Since the Fisher information matrix is diagonal,
 we have
 \begin{align*}
  (\gamma^2)^{\rm (sgm)}
   &= \sum_{u,v,w,z\in\cU}Q_{uv,wz}^{\rm (sgm)}J^{uw}J^{vz}
   \\
   &= \sum_{u,v\in\cU}Q_{uv,uv}^{\rm (sgm)}J^{uu}J^{vv}
    \\
  &= \sum_{u,v\in\cU}
 \mathrm{E}_{U,V,\tilde{U},\tilde{V}}
 \left[
  \omega_{\cU}(U,V,\tilde{U},\tilde{V})
  (U^{\top}V)^2(\tilde{U}^{\top}\tilde{V})^2
 \right]
 \frac{2^{|\sigma(u)|+|\sigma(v)|}}{\|u\|^4\|v\|^4}.
 \end{align*}
 Thus (\ref{eqn:gamma-grad}) is proved.
 (\ref{eqn:gamma-mix}) is shown similarly.

\subsection{Proof of Theorem~\ref{thm:main}}

We prove Theorem~\ref{thm:main} by using
the explicit expression (\ref{eqn:gamma-grad}) and (\ref{eqn:gamma-mix})
of the Efron curvature.
We abbreviate $\gamma_{\cU}^2$ as $\gamma^2$.

 We prove the first inequality in (\ref{eqn:main}).
 By the expression (\ref{eqn:gamma-grad}),
 it is sufficient to show that $\omega_{\cU}(u,u,-u,-u)=1$
 for some $u\in\cU$.
 Let $u$ be an element such that $\|u\|_1=\max_{v\in\cU}\|v\|_1$.
 Then we have $u+u-u-u=0$ and $u+u\notin\cU\cup\{0\}$,
 and hence $\omega_{\cU}(u,u,-u,-u)=1$.

 The second inequality in (\ref{eqn:main}) follows from
 equations (\ref{eqn:gamma-grad}), (\ref{eqn:gamma-mix}),
 and
 \[
 (U^{\top}V)^2(\tilde{U}^{\top}\tilde{V})^2
 \ \leq\ \|U\|^2\|V\|^2\|\tilde{U}\|^2\|\tilde{V}\|^2
 \ =\ \|u\|^4\|v\|^4.
 \]
 We now consider the equality condition.
 First assume $\cU\subset\mbZ_i$. Then
 $(U^{\top}V)^2(\tilde{U}^{\top}\tilde{V})^2$ in (\ref{eqn:gamma-grad})
 is equal to $(u_iv_i)^2(u_iv_i)^2$, which is equal to $\|u\|^4\|v\|^4$.
 Therefore $(\gamma^2)^{\rm (sgm)}=(\gamma^2)^{\rm (mix)}$.
 Conversely, assume $(\gamma^2)^{\rm (sgm)}=(\gamma^2)^{\rm (mix)}$.
 Since $\cU$ is a non-empty finite subset,
 there exist some $u\in\cU$ and some $i\in\{1,\ldots,m\}$ such that
 \begin{align*}
  u_i>0\ \mbox{and}\ u_i\geq w_i\ (\forall w\in\cU).
 \end{align*}
 Fix such $u$ and $i$.
 We show $u\in\mbZ_i$.
 Define an integer vector $\bar{u}\in\mbZ^m$
 by $\bar{u}_i=u_i$ and $\bar{u}_j=-u_j$ for $j\neq i$.
 Since $|u_i+\bar{u}_i|=2u_i>u_i$, we have
 $\mathrm{abs}(u+\bar{u})\notin \cU\cup\{0\}$
 and therefore $\omega_{\cU}(u,\bar{u},-u,-\bar{u})=1$.
 Let $\{U_{(k)}\}_{k=1}^4$ be
 four independent Bernoulli randomization of $u$.
 Note that each $U_{(k)}$ takes $u$ (resp.\ $\bar{u}$)
 with probability at least $2^{-m}$.
 We evaluate
 \begin{align*}
  0 &= (\gamma^2)^{\rm (mix)}-(\gamma^2)^{\rm (sgm)}
  \\
  &\geq \mathrm{E}_{U_{(1)},U_{(2)},U_{(3)},U_{(4)}}
  \left[
  \omega_{\cU}(U_{(1)},U_{(2)},U_{(3)},U_{(4)})
  \left(
  1 - \frac{(U_{(1)}^{\top}U_{(2)})^2(U_{(3)}^{\top}U_{(4)})^2}{\|u\|^8}
  \right)
  \right]
  \\
  &\geq 2^{-4m}
  \omega_{\cU}(u,\bar{u},-u,-\bar{u})
  \left(
  1 - \frac{(u^{\top}\bar{u})^4}{\|u\|^8}
  \right)
  \geq 0.
 \end{align*}
 This implies $|u^{\top}\bar{u}|=\|u\|^2$.
 By equality condition of the Cauchy-Schwarz inequality,
 there is a real number $\rho$ such that $u=\rho\bar{u}$.
 This implies $u\in\mbZ_i$.
 Now, by contradiction, assume that there exists some $v\in\cU\setminus\mbZ_i$.
 We further assume $v_i\geq w_i$ for any $w\in\cU\setminus\mbZ_i$ without loss of generality.
 Since $u_i+v_i>v_i$ and $u+v\notin\mbZ_i$,
 we deduce $u+v\notin\cU\cup\{0\}$.
 Hence $\omega_{\cU}(u,v,-u,-v)=1$.
 Then we have
 \[
  0\ =\ (\gamma^2)^{\rm (mix)}-(\gamma^2)^{\rm (sgm)}
 \ \geq\ 2^{-4m}\omega_{\cU}(u,v,-u,-v)
 \left(1-\frac{(u^{\top}v)^4}{\|u\|^4\|v\|^4}\right)
 \ \geq\ 0.
 \]
 This implies $|u^{\top}v|=\|u\|\|v\|$.
 By equality condition of the Cauchy-Schwarz inequality,
 there is a real number $\tilde{\rho}$ such that $v=\tilde{\rho}u$.
 This implies $v\in\mbZ_i$ and contradict the definition of $v$.
 Thus we have $\cU\subset \mbZ_i$.

\subsection{Proof of Theorem~\ref{thm:asymptotics} and Corollary~\ref{coro:asymptotics-easy}}

We first prove Theorem~\ref{thm:asymptotics}.
Put $d=\max_{m}\max_{u\in\cU_m}|\sigma(u)|<\infty$.
We abbreviate $\cU_m$ by $\cU$ below.
It is sufficient to prove that
$(\gamma_{\cU}^2)^{\rm (sgm)}\leq |N(\cU)|$
and $(\gamma_{\cU}^2)^{\rm (mix)}\geq c|M(\cU)|$
with a positive constant $c$.
If $(u,v)\notin N(\cU)$,
then $U^{\top}V=0$ in (\ref{eqn:gamma-grad}).
Hence
\[
 (\gamma_{\cU}^2)^{\rm (sgm)}
 \leq \sum_{(u,v)\in N(\cU)}\mathrm{E}_{U,V,\tilde{U},\tilde{V}}
 \left[\omega_{\cU}(U,V,\tilde{U},\tilde{V})\frac{(U^{\top}V)^2(\tilde{U}^{\top}\tilde{V})^2}
 {\|u\|^4\|v\|^4}\right]
 \leq |N(\cU)|.
\]
We next evaluate (\ref{eqn:gamma-mix}).
If $(u,v)\in M(\cU)$, then $\omega_{\cU}(u,v,-u,-v)=1$.
Since $u$ has at most $d$ non-zero elements, 
the event $U=u$ happens with probability
at least $2^{-d}$, where $U$ is a Bernoulli randomization of $u$.
Therefore
\[
 (\gamma_{\cU}^2)^{\rm (mix)}
 \geq \sum_{(u,v)\in M(\cU)}\mathrm{E}_{U,V,\tilde{U},\tilde{V}}
 \left[\omega_{\cU}(U,V,\tilde{U},\tilde{V})\right]
 \geq 2^{-4d}|M(\cU)|.
\]
This proves Theorem~\ref{thm:asymptotics}.

Next we prove Corollary~\ref{coro:asymptotics-easy}.
Assume $|N(\cU)|/|\mu(\cU)|^2\to 0$.
Note that $|\mu(\cU)|\to\infty$ since $|N(\cU)|\geq |\cU|\geq 1$.
From the definition of $M(\cU)$ and $\mu(\cU)$,
the set $\{(u,v)\in\cU^2\mid u,v\in\mu(\cU),u\neq v\}$
is a subset of $M(\cU)$.
Then we have $|M(\cU)|\geq |\mu(\cU)|(|\mu(\cU)|-1)$.
Thus
\[
 \frac{|N(\cU)|}{|M(\cU)|}
 \leq \frac{|N(\cU)|}{|\mu(\cU)|^2(1-|\mu(\cU)|^{-1})}
 \to 0
\]
and the proof is completed.

\section*{Acknowledgements}

This study was partially supported by the Global Center of
Excellence ``The research and training center for new development in
mathematics''
and by the Ministry of Education, Science, Sports and Culture, Grant-in-Aid
for Young Scientists (B), No.~19700258.



\end{document}